\documentclass[11pt]{article}

\usepackage{subfigure}
\usepackage{color}
\usepackage[cmex10]{amsmath}
\usepackage{mathrsfs,amssymb,amsmath}
\usepackage{cases}
\usepackage{graphicx}
\usepackage{caption}

\usepackage{epstopdf}
\usepackage{hyperref}
\usepackage{float}
\usepackage{placeins}

\def\disp{\displaystyle}

\newcommand{\R}{{\mathbb R}}

\def\dref#1{(\ref{#1})}
\def\crr{\cr\noalign{\vskip-0.0mm}}

\usepackage{algorithm} %format of the algorithm
\usepackage{algorithmic} %format of the algorithm

\newtheorem{corollary}{Corollary}[section]
\newtheorem{remark}{Remark}[section]
\newtheorem{lemma}{Lemma}[section]

\newtheorem{proposition}{Proposition}

\newtheorem{theorem}{Theorem}[section]
\newtheorem{definition}{Definition}[section]
\newtheorem{notation}{Notation}[section]

\newenvironment{proof}{{\bf {Proof.}}}{\hfill $\square$}
\allowdisplaybreaks[4]

\numberwithin{equation}{section}

\def\dref#1{(\ref{#1})}

\def\pt{\partial}

\def\ra{\rightarrow}

\def\s{\subseteq}

\def\e{\varepsilon}

\def\vp{\varphi}
\def\lg{\langle}

\def\rg{\rangle}

\def\es{\emptyset}

\def\bf{\textbf}
\def\pt{\partial}

\def\Om{\Omega}
\def\la{\lambda}
\def\al{\alpha}
\def\be{\beta}
\def\de{\delta}
\def\ga{\gamma}

\def\Ga{\Gamma}

\def\ts{\times}

\def\iy{\infty}

\def\f{\frac}

\def\Lra{\Leftrightarrow}

\def\Span{{\rm{span}}}

\def\df{\mathrm d}

\def\wh{\widehat}

\def\esssup{\operatorname*{ess\ \! sup}}

\def\hra{\hookrightarrow}

\def\mcO{\mathcal{O}}
\def\mcA{\mathcal{A}}

\def\mcD{\mathcal{D}}

	\DeclareMathOperator{\Div}{div}

	\DeclareMathOperator{\dist}{dist}

	\DeclareMathOperator{\supp}{{supp}}

	\newcommand{\N}{\mathbb N}

\begin{document}

\title{{\bf   {\bf        Shape Design for Degenerate Parabolic Equations with Degenerate Boundaries and Its Application to Boundary Observability}}\footnote{\small This work was carried out with the support of the
National Natural Science Foundation of China under grant  nos. 12131008 and U23B2033, and
National Key R\&D Program of China under grant no. 2024YFA1013101.}}

\author{ Dong-Hui Yang$^{a}$, Bao-Zhu Guo$^{b}$\footnote{\small
The corresponding author. Email: bzguo@is.ac.cn}
\\
$^a${\it School of Mathematics and Statistics, Central South University}\\
			{\it Changsha 410075, P.R.China}\\
$^b${\it Academy of Mathematics and Systems Science, Academia Sinica, Beijing 100190, China}}

%\date{September 29, 2021}
\date{}

\maketitle{}

\begin{abstract}
     In this study, we firstly establish the well-posedness of a degenerate parabolic equation under Dirichlet boundary conditions. Following this, we introduce a shape design problem, which acts as a framework for approximating the degenerate parabolic equation through a series of uniformly parabolic equations. Finally, as a tangible application of this shape design approach, we deduce a boundary observability inequality associated with the degenerate parabolic equation.

\vspace{0.3cm}

\noindent {\bf {Keywords:}}  Degenerate partial differential equations, shape design.
	
\vspace{0.3cm}
		
		\noindent {\bf {AMS subject classifications (2010):}}~ 35J70, 35K65, 49Q10, 93B05.

	\end{abstract}

\section{Introduction}
	
	   Degenerate partial differential equations have been a central focus of substantial research in the existing literature. For in-depth explorations, we  refer to \cite{Cavalheiro,CS2,CS3,Fabes,FF,Guo,Trudinger,Yang,Wu1}. When dealing with these types of equations, it is inherently logical to analyze their solutions within the framework of weighted Sobolev spaces; relevant references include \cite{GC,Heinonen}. Moreover, degenerate partial differential equations can be effectively approximated by sequences of nondegenerate problems, with further insights provided in \cite{Cavalheiro,FF}.

Shape design problems have also been thoroughly investigated, with relevant studies such as \cite{Buttazzo,Chenais,Greco,Guo2,Guo1,Guo,He,Henrot,Privat,Tiba,Wang,Yang}. However, in the realm of degenerate parabolic equations, the research remains relatively sparse; additional information can be found in \cite{Greco,Guo,Yang}.

The controllability and observability of partial differential equations have been extensively studied, as demonstrated by works such as \cite{Beauchard1,Chen,Coron,FG,Fursikov,Lions,Lu,Lebeau,Miller,Weiss,Zuazua}. It is a well-established fact that controllability and observability are equivalent concepts, a relationship established through the Hilbert Uniqueness Method introduced by Lions \cite{Lions}.

For degenerate partial differential equations, particularly in one- and two-dimensional settings, numerous results have been obtained; for further details, consult \cite{Beauchard,Buffe1,new1,Yang1}. Conversely, in higher-dimensional cases, only a limited number of studies have been conducted \cite{Guo,Wu,Yang}. One of the primary challenges in higher dimensions stems from the absence of appropriate integration-by-parts formulas.

In \cite{Wu}, the authors employed a cut-off technique to derive general controllability and observability results. In \cite{Yang1}, the method of separation of variables was used to establish controllability in the two-dimensional context. In \cite{Guo}, shape design methods were pioneered to investigate controllability problems for one-dimensional degenerate equations.

In this study, we utilize a shape design methodology to explore the boundary controllability of higher-dimensional degenerate parabolic equations. The advantage of this approach lies in its ability to approximate the degenerate parabolic equation through a series of uniformly parabolic equations, along with uniform observability estimates. Subsequently, the observability of the degenerate equation is achieved through this approximation process. Notably, the constants within these estimates must exhibit uniformity. In this paper, we demonstrate the uniformity of these constants, marking another significant contribution of our research. It is pertinent to note that shape design methods share conceptual similarities with cut-off techniques. However, while the cut-off technique addresses the issue within a single evolving system, shape design methods generally encompass a collection of distinct systems. Finally, we emphasize that the method of separation of variables is not applicable to the context examined in this paper.

	   The problem addressed in this paper is characterized by the following high-dimensional degenerate parabolic equation:
	\begin{equation}\label{01.08.1}
		\begin{cases}
			\pt_{t}y-\Div (A\nabla y)= f, &\mbox{in } Q,\\
			y=0, &\mbox{on }\pt Q, \\
			y(0)=y^0, &\mbox{in } \Om,
		\end{cases}
	\end{equation}
	  where $\Om\s \R^{N-1}\ts (0,+\iy)$   is a convex domain satisfying
$ \Ga_N^0=\pt\Om\cap (\R^{N-1}\ts \{0\})\neq \es$,  and $0\in\R^N$  is an interior point of
 $\Ga_N^0$.  Additionally, $\pt\Om\in C^3$,    $Q=\Om\ts (0,T)$ for $T>0$, and
\begin{equation}
A = \mathrm{diag}(\underbrace{1,\cdots, 1}_{N - 1}, x_N^\alpha), \quad \text{with } \alpha \in (0, 1),
\end{equation}
and $f\in L^2(Q)$, $y^0\in L^2(\Om)$. We define
\begin{equation}\label{02.05.1}
\mathcal{A}y = -\mathrm{div}(A\nabla y), \quad D(\mathcal{A}) = H_0^1(\Omega; w) \cap H^2(\Omega; w).
\end{equation}

  To derive a boundary observability inequality for degenerate parabolic equations, it is essential to enhance the regularity of the normal derivative
$\frac{\partial y}{\partial \nu}$. However, obtaining such regularity on the degenerate part of the boundary presents significant challenges. Fortunately, it is possible to establish the regularity of  $\frac{\partial y}{\partial \nu}$
 on portions of the boundary that are away from the degeneracy. Boundary controls acting on these nondegenerate regions are sufficient to drive the system to zero, thereby achieving null controllability.

We highlight that in \cite{Cannarsa1}, the authors investigated an intriguing two-dimensional degenerate parabolic equation. To derive controllability results, they introduced specific structural assumptions (\cite[Hypothesis 2.2, Chapter 2.1.2, p. 12]{Cannarsa1}). Notably, the coefficient matrix is required to degenerate at the first eigenvalue across the entire boundary. Under this premise, integration by parts remains applicable, which is pivotal in their analysis.

However, this assumption is inadequate for general degenerate parabolic equations, such as \eqref{01.08.1}. In many scenarios, degeneracy arises solely on a segment of the boundary, rather than the entire boundary. In such cases, integration by parts, exemplified by
$(P_{12}z,P_{21}z)_{L^2(Q)}$  in Section \ref{S4} of this paper, generally fails, presenting a fundamental challenge.

One potential strategy to address this issue is to enhance the regularity of weak solutions. Yet, to the best of our knowledge, when degeneracy occurs only on part of the boundary, achieving such a regularity improvement is typically unfeasible. This challenge is also evident in equation \eqref{01.08.1} when  $\alpha \in [1,2)$.  In this scenario, establishing controllability seems unattainable, as the boundary integral
$\sum_{i=1}^{N-1} \int_{(\pt\Om\setminus \Ga^+)\ts (0,T)} x_N\left(\frac{\partial y}{\partial x_i}\right)^2 \, dS \, dt= 0$
is ill-defined. Obtaining this term would necessitate higher solution regularity on
 $\pt\Om\setminus \Ga^+$, which is generally unavailable. The paper \cite{Cannarsa1} resolves this issue only under very specific conditions. Given that
$\sum_{i=1}^{N-1} \int_{(\partial\Omega \setminus\Gamma^+)\times(0,T)} x_N\left(\frac{\partial y}{\partial x_i}\right)^2 \, dS \, dt \ge 0$
is self-evident, a natural approach to overcome this difficulty would be to impose control on the entire boundary. However, boundary control acting on the whole boundary is essentially trivial and thus lacks practical significance.

Another two-dimensional degenerate parabolic equation, set on a rectangular domain, was explored in \cite{Araruna}, where two sides of the boundary exhibit degeneracy. Similar to \cite{Cannarsa1}, they constructed appropriate solution spaces to facilitate integration by parts. In contrast, in \cite{Yang1}, we circumvented this challenge by employing the method of separation of variables.

An alternative and more adaptable approach is the shape design method adopted in this paper. This method eliminates the need for integration by parts on the degenerate part of the boundary. Furthermore, it does not rely on specific geometric assumptions about the domain in the case of Dirichlet boundary conditions. Nevertheless, for Neumann boundary conditions, this issue remains unresolved.

	 For convenience, we introduce the following notation, which will be used throughout the paper.

\begin{notation}\label{01.27.N1}
Denote $N=\{0,1,2,\cdots\}$ and $\N^*=\{1,2,\cdots\}$, and  $M=\sup_{x\in\Om}|x|+1$, and $\{x_N<r\}=\{x=(x',x_N)\in\R^N\colon x_N<r\}$ for some $r\in\R$ with $x'=(x_1,\cdots, x_{N-1})$, and $e_N=(0,\cdots,0,1)\in\R^N$.

It is evident that  $\nu\cdot e_N=-1$ on $\Ga_N^0$. Given $\pt\Om\in C^3$, there exists $\de_0>0$ such that, for all $\de\in (0,4\de_0)$,
 there is at least one $C^3$ convex domain  $\Om_\de$
 satisfying
\begin{equation}\label{01.26.4}
{x = (x', x_N) \in \Omega \colon x_N > 2\delta} \subset \Omega_\delta \subset {x = (x', x_N) \in \Omega \colon x_N < \delta},
\end{equation}
and
\begin{equation}\label{01.27.2}
\Omega_{\delta_1} \subset \Omega_{\delta_2}, \text{ for all } 0 < \delta_2 < \delta_1 < \delta_0,
\end{equation}
and $\nu(x)\cdot e_N\leq 0$ for all $x\in \pt\Om_\de\cap \Om$.  Denote
\begin{equation}
\Gamma^+ = {x \in \partial \Omega \colon A(x)\nu(x) \cdot e_N > 0}.
\end{equation}
It is clear that $\Gamma^+$  depends only  on
 $\Omega$; however, it does not depend on   $\delta \in (0, \delta_0)$.
For all $\de\in (0,\de_0)$, denote
\begin{equation}\label{01.27.4}
 \mcO(\Ga^+,\de)=\left\{x\in\Om\colon \dist(x,\Ga^+)<\de\right\}, \mbox{ with } \dist(x,\Ga^+)=\inf_{z\in \Ga^+}|x-z|.
\end{equation}
Then, for each $x=(x',x_N)\in \mcO(\Ga^+,\de)$, we have  $x_N>\de_0$, and $\mcO(\Ga^+,\de_0)\s \Om_{\de_0}$. Finally, the symbol
$C$
 shall represent a constant; however, its value may vary across different contexts.
\end{notation}

 The main results of this paper are outlined as follows.

\begin{theorem}\label{01.11.T3}
	 Let $\al\in (0,1)$ and $y^0\in C_0^\iy(\Om)$ and $f\in L^2(Q)$. Let
$y(\cdot,\cdot)$
 be the weak solution of \eqref{01.11.14} with respect to $(y^0,f)$, and $y_\de(\cdot,\cdot) (0<\de< \min\{\de_0, \dist(\supp y^0, \Ga_N^0)\})$
  be the weak solution of \eqref{01.11.4} with respect to $(y^0|_{\Om_\de}, f|_{Q_\de})$. Then,
	\begin{equation}\label{01.11.18}
		\begin{split}
			Ey_\de
			&\ra y \mbox{ weakly in } L^2(0,T; H_0^1(\Om;w)), \\
			\pt_t Ey_\de
			&\ra \pt_ty \mbox{ weakly in } L^2(Q),\\
			Ey_\de
			&\ra y \mbox{ strongly in } L^2(Q).
		\end{split}
	\end{equation}
	 Moreover, if we additionally assume $f=0$, then
	\begin{equation}\label{01.12.5}
		\begin{split}
			Ey_\de(T)
			&\ra y(T) \mbox{ strongly in } L^2(\Om), \\
			\f{\pt Ey_\de}{\pt \nu}
			=\f{\pt y_\de}{\pt \nu}
			&\ra \f{\pt y}{\pt \nu} \mbox{ strongly in } L^2(0,T; L^2(\Ga^+)).
		\end{split}
	\end{equation}
\end{theorem}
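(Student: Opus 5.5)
We work with the approximating problems \eqref{01.11.4} on $Q_\de=\Om_\de\ts(0,T)$ with data $(y^0|_{\Om_\de},f|_{Q_\de})$, and with $Ey_\de$, the extension of $y_\de$ by zero to $\Om$. Since $y_\de\in H_0^1(\Om_\de)$, we have $Ey_\de\in H_0^1(\Om;w)$ and $\nabla Ey_\de=E\nabla y_\de$. The argument has three stages: uniform estimates, compactness with identification of the limit, and, for $f=0$, strong convergence of the terminal state and of the normal derivative on $\Ga^+$.

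\textbf{Uniform bounds.} We test \eqref{01.11.4} with $y_\de$ and with $\pt_t y_\de$. This gives
\[
\|y_\de\|_{L^\iy(0,T;L^2)}+\|\nabla y_\de\|_{L^2(0,T;L^2(\Om_\de;w))}\le C(\|y^0\|_{L^2}+\|f\|_{L^2(Q)}),
\]
\[
\|\pt_t y_\de\|_{L^2(Q_\de)}+\sup_t\int_{\Om_\de}A\nabla y_\de\cdot\nabla y_\de\le C\Big(\int_\Om A\nabla y^0\cdot\nabla y^0+\|f\|^2_{L^2(Q)}\Big).
\]
The right-hand side is independent of $\de$ because $y^0\in C_0^\iy(\Om)$ and $\supp y^0\s\Om_\de$ for the admissible $\de$, by \eqref{01.26.4}. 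The weight $x_N^\al$ is bounded above by $M^\al$, so all constants are uniform.

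\textbf{Compactness and identification.} From these bounds, $Ey_\de$ is bounded in $L^2(0,T;H_0^1(\Om;w))\cap H^1(0,T;L^2(\Om))$. For $\al<1$ the embedding $H_0^1(\Om;w)\hra L^2(\Om)$ is compact (a weighted Hardy/Rellich argument, which I take from the earlier well-posedness section). The Aubin–Lions lemma then gives a subsequence converging weakly in the first two senses of \eqref{01.11.18} and strongly in $L^2(Q)$ to some $\hat y$. To identify $\hat y=y$, take a test function $\vp\in C_0^\iy(\Om)$ and a smooth $\eta(t)$. Then $\supp\vp\s\Om_\de$ for small $\de$ by the nesting \eqref{01.27.2}, and we can pass to the limit in
\[
\int_{Q}\big(\pt_tEy_\de\,\vp+A\nabla Ey_\de\cdot\nabla\vp-f\vp\big)\eta=0.
\]
Density of $C_0^\iy(\Om)$ in $H_0^1(\Om;w)$ and $\hat y(0)=y^0$ (from the $H^1(0,T;L^2)$ bound) show that $\hat y$ is a weak solution. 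Uniqueness of the weak solution of \eqref{01.11.14} then yields convergence of the whole family.

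\textbf{The case $f=0$, terminal state.} The bounds above give $Ey_\de(T)$ bounded in $H_0^1(\Om;w)$, and $Ey_\de(T)\rightharpoonup y(T)$ because $Ey_\de\rightharpoonup y$ in $H^1(0,T;L^2)\hra C([0,T];L^2)$ weakly. The compact embedding then upgrades this to strong convergence in $L^2(\Om)$.

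\textbf{The case $f=0$, normal derivative (main obstacle).} The convergence of $\pt_\nu y_\de$ on $\Ga^+$ is local: $\Ga^+$ lies where $x_N>\de_0$ and $\mcO(\Ga^+,\de_0)\s\Om_{\de_0}\s\Om_\de$. Near $\Ga^+$, therefore, $\pt\Om_\de$ coincides with $\pt\Om$, and the operator is uniformly elliptic with smooth coefficients.

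1. Take a cutoff $\theta\in C^\iy$ equal to $1$ near $\Ga^+$ and supported in $\mcO(\Ga^+,\de_0)$.

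2. Then $z_\de=\theta y_\de$ solves a uniformly parabolic problem on the fixed smooth domain $\Om\cap\mcO(\Ga^+,\de_0)$, with homogeneous Dirichlet data on its boundary and source $-2A\nabla\theta\cdot\nabla y_\de-y_\de\,\Div(A\nabla\theta)$, which is bounded in $L^2(Q)$ and in fact converges strongly in $L^2(Q)$ by interior convergence.

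3. The difficulty is to get strong, not merely weak, convergence of the sources. I would first apply interior parabolic regularity on $\supp\nabla\theta$, which lies at positive distance from the degenerate part. Using also the smoothing given by $f=0$ and $y^0$ smooth, this yields $y_\de\to y$ in $L^2(0,T;H^1)$ on $\supp\nabla\theta$, hence strong $L^2$ convergence of the sources.

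4. Maximal $L^2$ regularity then gives $z_\de\to\theta y$ in $L^2(0,T;H^2)\cap H^1(0,T;L^2)$.

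5. By the trace theorem, $\pt_\nu y_\de=\pt_\nu z_\de\to\pt_\nu y$ strongly in $L^2(0,T;L^2(\Ga^+))$.

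Finally, $\pt_\nu Ey_\de=\pt_\nu y_\de$ on $\Ga^+$ holds trivially, since the two functions coincide near $\Ga^+$.
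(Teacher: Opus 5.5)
Your first three stages are essentially the paper's argument: uniform bounds from testing with $y_\de$ and $\pt_t y_\de$; Aubin--Lions, identification of the limit with test functions supported in $\Om_\de$, and uniqueness; and the upgrade of $Ey_\de(T)\rightharpoonup y(T)$ through the compact embedding $H_0^1(\Om;w)\hra L^2(\Om)$. For the normal derivative you take a genuinely different route.

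The paper differentiates in time. Because $f=0$ and $y^0\in\mcD(\Om)$, $\pt_t y_\de$ is again a solution, with initial datum (up to sign) $\mcA y^0\in H_0^1$. Corollary \ref{01.12.C2} together with \eqref{01.11.17} then bounds both $\f{\pt y_\de}{\pt\nu}$ and $\pt_t\f{\pt y_\de}{\pt\nu}$ uniformly in $L^2(0,T;H^{1/2}(\Ga^+))$, and Aubin--Lions on $\Ga^+$ gives strong convergence, with the limit identified only implicitly.

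You instead localize and use linearity. The difference $\theta y_\de-\theta y_{\de'}$ solves one fixed uniformly parabolic Dirichlet problem with zero initial datum and a source made only of lower-order commutator terms. Maximal regularity therefore turns strong $L^2(0,T;H^1)$ convergence of $y_\de$ on $\supp\theta$ into convergence of $\theta y_\de$ in $L^2(0,T;H^2)$. This needs only first-order time regularity, so it buys the following:
\begin{itemize}
\item it works for any $f\in L^2(Q)$, since the extra source $\theta f$ is independent of $\de$;
\item it needs no condition such as $\mcA y^0\in H_0^1(\Om;w)$;
\item it identifies the limit as $\f{\pt y}{\pt\nu}$ directly and gives convergence of the whole family;
\item the convergence even holds in $L^2(0,T;H^{1/2}(\Ga^+))$.
\end{itemize}
The paper's route is shorter once Corollary \ref{01.12.C2} is available, but it is tied to $f=0$.

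One step needs repair. In your Step 3, $\supp\nabla\theta$ is not an interior set. On the connected surface $\pt\Om$, $\theta$ must pass from $1$ near $\Ga^+$ to $0$ away from it, so $\supp\nabla\theta$ necessarily meets $\pt\Om$ on its nondegenerate part. Interior parabolic regularity gives no control of $\nabla y_\de$ up to that boundary, so you need estimates up to the nondegenerate boundary. There are two ways to get them.

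The first uses the uniform bound of $y_\de$ in $L^2(0,T;H^2(\mcO(\Ga^+,\de_0)))$ contained in \eqref{01.11.17}, plus the bound on $\pt_t y_\de$ and Aubin--Lions. With that bound you could even skip maximal regularity and pass to the normal trace via compactness in $L^2(0,T;H^s)$ with $s\in(\f{3}{2},2)$.

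The second is cheaper and self-contained. It uses the energy identity
\begin{equation*}
\f{1}{2}\|y_\de(T)\|^2_{L^2(\Om_\de)}+\int_0^T\|y_\de\|^2_{H_0^1(\Om_\de;w_\de)}\df t=\f{1}{2}\|y^0\|^2_{L^2(\Om)}+\iint_{Q_\de}fy_\de\df x\df t
\end{equation*}
and its analogue for $y$. Together with your $Ey_\de(T)\ra y(T)$ in $L^2(\Om)$ and $Ey_\de\ra y$ in $L^2(Q)$, it gives convergence of norms, hence $Ey_\de\ra y$ strongly in $L^2(0,T;H_0^1(\Om;w))$. Since $x_N^\al\geq\de_0^\al$ on $\supp\theta$, this is exactly the strong convergence of the sources you need.

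Also, $\Om\cap\mcO(\Ga^+,\de_0)$ need not be smooth. Pose the localized problem on a fixed smooth domain whose boundary agrees with $\pt\Om$ near $\supp\theta\cap\pt\Om$ and on whose remaining boundary $\theta$ vanishes. With these repairs your argument is complete.
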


\begin{theorem}\label{01.27.T2}
	 Let $\al\in (0,1)$ and $y^T\in L^2(\Om)$.
  Let $y(\cdot,\cdot)$
 be the weak solution of \eqref{01.28.1} with respect to $(y^0,f=0)$.  Then,
	\begin{equation*}
		\int_{\Om}y^2(0)\df x\leq C\iint_{\Ga^+\ts (0,T)}\left|\f{\pt y}{\pt \nu}\right|^2\df S\df t,
	\end{equation*}
	where the constant $C>0$ depends only on $\al, T$ and $\Om_{\de_0}$.
\end{theorem}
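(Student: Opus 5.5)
We obtain Theorem \ref{01.27.T2} by approximation, combining a uniform observability inequality on the nondegenerate domains $\Om_\de$ with the convergence in Theorem \ref{01.11.T3}. Problem \eqref{01.28.1} is the backward adjoint system. After the change of variable $t\mapsto T-t$ it becomes a forward problem of type \eqref{01.08.1} with $f=0$ and initial datum $y^T$, so Theorem \ref{01.11.T3} applies to it.

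\textbf{Reduction to smooth data.} First take $y^T\in C_0^\iy(\Om)$, and let $\de$ satisfy $0<\de<\min\{\de_0,\dist(\supp y^T,\Ga_N^0)\}$. Let $y_\de$ solve the approximating problem \eqref{01.11.4} on $Q_\de=\Om_\de\ts(0,T)$ with datum $y^T|_{\Om_\de}$. On $\Om_\de$ we have $x_N\geq\de>0$, so the operator is uniformly parabolic and $y_\de$ is a strong solution with $H^2$ regularity. This justifies all integrations by parts.

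\textbf{Uniform Carleman estimate.} The key step is a global Carleman estimate for $y_\de$ with constants independent of $\de$. We use the weight $\vp(x,t)=\theta(t)\big(e^{2\la\|\psi\|_\iy}-e^{\la\psi(x)}\big)$, where $\theta(t)=(t(T-t))^{-1}$ and $\psi$ is built from the degenerate direction, for instance $\psi$ involving $x_N^{2-\al}$ plus a term such as $-|x'|^2$. We conjugate $z=e^{-s\vp}y_\de$ and split the conjugated operator into a symmetric part $P_1z$ and an antisymmetric part $P_2z$. We then expand $2(P_{12}z,P_{21}z)_{L^2(Q_\de)}$ by integration by parts on $\Om_\de$.

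The boundary terms on $\pt\Om_\de$ reduce, using $z=0$, to multiples of $A\nu\cdot\nabla\psi\,|\pt_\nu z|^2$. The weight is chosen so that $\nabla\psi$ is essentially $e_N$, making the sign of this boundary term that of $A\nu\cdot e_N$. By \eqref{01.26.4} and the property $\nu\cdot e_N\leq 0$ on $\pt\Om_\de\cap\Om$, the new artificial boundary has $A\nu\cdot e_N\leq0$, so its contribution has a favorable sign. The part of $\pt\Om_\de$ lying on $\pt\Om$ where the contribution is unfavorable is contained in $\Ga^+$. Moreover $\mcO(\Ga^+,\de_0)\s\Om_{\de_0}\s\Om_\de$, and $x_N>\de_0$ near $\Ga^+$, so the constants there depend only on $\Om_{\de_0}$. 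The interior terms must be controlled using only $\al\in(0,1)$ and Hardy-type inequalities in $x_N$ that hold uniformly in $\de$. This is where the restriction $\al<1$ enters.

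\textbf{Main obstacle.} The main difficulty is checking that every constant ($s_0$, $\la_0$, $C$) is independent of $\de$. In particular, the cross term coming from $\pt_{x_N}(x_N^\al)$ must be absorbed uniformly as $x_N\to\de$. I would first try $\psi$ depending only on $x_N$ near the degenerate part and use $\al<1$ to make the leading term $\al x_N^{\al-1}$ coercive. Once the Carleman estimate holds, the standard energy dissipation argument gives
\[
\int_{\Om_\de}y_\de^2(0)\,\df x\leq C\iint_{\Ga^+\ts(0,T)}\Big|\f{\pt y_\de}{\pt\nu}\Big|^2\df S\,\df t,
\]
with $C=C(\al,T,\Om_{\de_0})$.

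\textbf{Passage to the limit and density.} We let $\de\to0$ using \eqref{01.12.5}, in its time-reversed form. The left side converges because $Ey_\de(0)\to y(0)$ strongly in $L^2(\Om)$. The right side converges by the strong convergence of the normal derivatives in $L^2(0,T;L^2(\Ga^+))$. Finally, for general $y^T\in L^2(\Om)$ we approximate by $C_0^\iy$ data. The well-posedness estimate gives $y(0)$ depending continuously on $y^T$. The boundary trace on $\Ga^+$ also depends continuously on $y^T$ by interior parabolic smoothing away from degeneracy: near $\Ga^+$ we have $x_N>\de_0$, and the trace is taken over $(0,T)$ with the singularity at $t=T$ handled by a cut-off in time, or by approximating on $(0,T-\e)$. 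We then pass to the limit in the inequality.
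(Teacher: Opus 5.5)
Your architecture matches the paper's. You prove a Carleman estimate on $\Om_\de$ whose only unfavorable boundary term sits on $\Ga^+$, deduce observability for $y_\de$ from the monotonicity of $\|y_\de(t)\|_{L^2(\Om_\de)}$, pass to $\de\to0$ with Theorem \ref{01.11.T3}, and finish by density. The gap is in the key step. With the weight you write down, the Carleman constants are not uniform in $\de$, and the obstruction is not the one you name. The conjugation produces $(z_t,s\vp_tz)_{L^2(Q_\de)}=-\f{s}{2}\iint_{Q_\de}\vp_{tt}z^2\,\df x\,\df t$. For $\theta=(t(T-t))^{-1}$ one has $\vp_{tt}\geq c\,\theta^3$ near $t=0,T$, uniformly in $x$, including as $x_N\to0$. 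With $\psi=x_N^{2-\al}$ (the choice you lean toward), $A\nabla\psi\cdot\nabla\psi=(2-\al)^2x_N^{2-\al}$. Hence every positive zero-order term carries the factor $s^3\theta^3x_N^{2-\al}$ (up to powers of $\la$ and $e^{\la\psi}$), and the gradient term is $s\theta x_N^\al|\pt_{x_N}z|^2$. Interpolating between these with Hardy's inequality controls at most $s^2\theta^2\int_{\Om_\de}z^2\,\df x$. Absorbing $s\theta^3\int_{\Om_\de}z^2\,\df x$ would then require $\theta\leq Cs$, which fails near $t=0,T$. Using $x_N\geq\de$ on $\Om_\de$ instead forces $s_0\sim\de^{-(2-\al)/2}$, which destroys uniformity.

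The paper takes $\Theta=(t(T-t))^{-4}$, so that $|\Theta''|\leq C\Theta^{3/2}$. Then $s\Theta^{3/2}z^2\leq s^2\Theta^2x_N^{2-\al}z^2+\Theta x_N^{\al-2}z^2$, and the last term is absorbed through Lemma \ref{11.29.L1}. It also uses the linear weight $\xi=\Theta(\ga-x_N^{2-\al})$, with no $\la$ and no tangential part. Then $\Div(A\nabla\eta)=2-\al$ is constant, the tangential gradient terms cancel exactly, and the coefficient of $x_N^\al|\pt_{x_N}z|^2$ is exactly $(2-\al)^2s\Theta$. The $\pt_{x_N}(x_N^\al)$ contribution you worry about enters with a negative sign, $-\al(2-\al)s\Theta$, and is dominated by $2(2-\al)s\Theta$ for every $\al\in(0,2)$. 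The restriction $\al<1$ is used only through Hardy's inequality.

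A smaller point concerns your final density step. The map $y^T\mapsto\f{\pt y}{\pt\nu}$ on $\Ga^+\ts(0,T)$ is not continuous from $L^2(\Om)$ into $L^2(\Ga^+\ts(0,T))$: for $L^2$ data the normal derivative need not be square integrable up to $t=T$. Your alternative of ``approximating on $(0,T-\e)$'' needs the inequality for the final datum $y(T-\e)$, which is not in $C_0^\iy(\Om)$, so that intermediate case is missing.

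The paper closes this in two stages. First, it extends the inequality to $y^T\in H_0^1(\Om;w)$ by approximation in $H_0^1(\Om;w)$. It passes to the limit in the boundary term using \eqref{01.11.8} (the $L^2(0,T;H^2(\mcO(\Ga^+,\de_0)))$ bound) together with the trace theorem. Second, for $y^T\in L^2(\Om)$, it applies this result on $(0,T/2)$ to the datum $y(T/2)\in H_0^1(\Om;w)$ and then enlarges the time interval, which is allowed because the integrand is nonnegative. Alternatively, you could keep the weight $\Theta e^{-2s\xi}$, which vanishes at $t=T$, on the right-hand side and combine it with a time-weighted trace estimate. As written, though, your remark does not close the argument.
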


This paper is organized  as follows. In Section \ref{S2}, we introduce the functional framework and establish the existence of weak solutions to the degenerate partial differential equations. In Section \ref{S3}, we investigate a shape design problem, through which the degenerate parabolic equation is approximated by a sequence of uniformly parabolic equations with appropriately chosen initial data. In Section \ref{S4}, we derive Carleman estimates for the approximating uniformly parabolic equations, with constants that are uniform with respect to the parameter
$(0,\delta_0)$.  Finally, in Section \ref{S5}, we obtain a boundary observability inequality for the degenerate parabolic equations.

\section{Solution spaces}\label{S2}

 In this section, we present the suitable solution spaces for equation \eqref{01.08.1} and subsequently demonstrate the existence of a weak solution to this equation.

Define
\begin{equation*}
	H^1(\Om;w)=\left\{u\in L^2(\Om)\colon \int_\Om \nabla u\cdot A\nabla u\df x<+\iy\right\},
\end{equation*}
where $\nabla u=(\f{\pt u}{\pt x_1}, \cdots, \f{\pt u}{\pt x_N})$
represents the gradient of
$u$. The inner product and norm on  $H^1(\Om;w)$ are defined  as
\begin{equation*}
	(u,v)_{H^1(\Om;w)}=\int_\Om uv\df x+\int_\Om \nabla u\cdot A\nabla v\df x, \quad \|u\|_{H^1(\Om;w)}=(u,u)_{H^1(\Om;w)}^\f{1}{2},
\end{equation*}
respectively.
Let $\mcD(\Om)=C_0^\iy(\Om)$, and
\begin{equation*}
	H_0^1(\Om;w)=\mbox{the closure of $\mcD(\Om)$ in $H^1(\Om;w)$}.
\end{equation*}
We denote $H^{-1}(\Om;w)$  as the dual space of $H_0^1(\Om;w)$.

Define
\begin{equation*}
	H^2(\Om;w)=\left\{u\in H^1(\Om;w)\colon \mcA u\in L^2(\Om)\right\},
\end{equation*}
where $\mcA$ is given by \dref{02.05.1}.
 The inner product and the norm on $H^2(\Om;w)$ are defined as
\begin{equation*}
	(u,v)_{H^2(\Om)}=(u,v)_{H^1(\Om;w)}+\int_\Om \left(\mcA u\right)\left(\mcA v\right)\df x, \quad \|u\|_{H^2(\Om;w)}=(u,u)_{H^2(\Om;w)}^\f{1}{2},
\end{equation*}
respectively.

The following Lemma \ref{Le.g1.1} is a well-established fact  as referenced in \cite{GC,Heinonen}.
\begin{lemma}\label{Le.g1.1}
	The spaces
	\begin{equation*}\label{12.08.1}
		  (H^1(\Om;w),(\cdot,\cdot)_{H^1(\Om;w)}),\quad (H^2(\Om;w), (\cdot,\cdot)_{H^2(\Om;w)})
	\end{equation*}
	are Hilbert spaces.
\end{lemma}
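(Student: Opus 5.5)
The plan is to prove that both spaces are complete; the inner-product axioms are immediate. The one nontrivial point is positive definiteness, and it holds because each norm contains the term $\int_\Om u^2\df x$. Throughout, "$\nabla u$" is the distributional gradient of $u\in L^2(\Om)\s L^1_{loc}(\Om)$. The definition of $H^1(\Om;w)$ asks that this gradient be a function with $\int_\Om \nabla u\cdot A\nabla u\df x=\sum_{i<N}\|\pt_{x_i}u\|_{L^2}^2+\|x_N^{\al/2}\pt_{x_N}u\|_{L^2}^2<\iy$. Since $x_N\ge \de$ on $\{x\in\Om: x_N>\de\}$, such $u$ automatically lie in $H^1_{loc}(\Om)$, so $\nabla u\in L^1_{loc}(\Om)$.

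\textbf{Completeness of $H^1(\Om;w)$.} Let $\{u_n\}$ be Cauchy. Then $u_n\to u$ in $L^2(\Om)$, $\pt_{x_i}u_n\to g_i$ in $L^2(\Om)$ for $i<N$, and $x_N^{\al/2}\pt_{x_N}u_n\to h$ in $L^2(\Om)$. Set $g_N=x_N^{-\al/2}h$. Because $\al<1$ and $\Om$ is bounded, Cauchy--Schwarz gives $x_N^{-\al/2}\in L^2(\Om)$, so $g_N\in L^1(\Om)$. Moreover $\pt_{x_N}u_n\to g_N$ in $L^1(\Om)$, since
\[
\|\pt_{x_N}u_n-g_N\|_{L^1}\le\|x_N^{-\al/2}\|_{L^2}\,\|x_N^{\al/2}\pt_{x_N}u_n-h\|_{L^2}.
\]
Now pass to the limit in $\int_\Om u_n\pt_{x_i}\vp\df x=-\int_\Om \pt_{x_i}u_n\,\vp\df x$ for $\vp\in C_0^\iy(\Om)$. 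This is legitimate because both sides converge: the left side by $L^2$ convergence, the right side by $L^1$ convergence paired with bounded $\vp$. Hence $\nabla u=(g_1,\dots,g_N)$ in the distributional sense. Then $x_N^{\al/2}\pt_{x_N}u=h\in L^2$, so $u\in H^1(\Om;w)$ and $\|u_n-u\|_{H^1(\Om;w)}\to0$. This identification of the weighted limit with the weak derivative is the only delicate step. If one prefers to avoid the integrability of $x_N^{-\al/2}$, it can also be done locally, on compact subsets where the weight is bounded below; that argument works for any $\al$.

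\textbf{Completeness of $H^2(\Om;w)$.} Let $\{u_n\}$ be Cauchy in $H^2(\Om;w)$. Then it is Cauchy in $H^1(\Om;w)$, so $u_n\to u$ there by the first part, and $\mcA u_n\to F$ in $L^2(\Om)$. We must show $\mcA u=F$ in distributions. For $\vp\in C_0^\iy(\Om)$,
\[
\langle \mcA u_n,\vp\rangle=\int_\Om A\nabla u_n\cdot\nabla\vp\df x .
\]
This pairing is continuous with respect to the $H^1(\Om;w)$ norm, because
\[
\left|\int_\Om A\nabla v\cdot\nabla\vp\df x\right|\le \Big(\int_\Om\nabla v\cdot A\nabla v\df x\Big)^{1/2}\Big(\int_\Om\nabla\vp\cdot A\nabla \vp\df x\Big)^{1/2}.
\]
Passing to the limit gives $\langle\mcA u,\vp\rangle=\int_\Om F\vp\df x$. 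Thus $\mcA u=F\in L^2(\Om)$, so $u\in H^2(\Om;w)$ and $u_n\to u$ in the $H^2(\Om;w)$ norm.

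Finally, $H_0^1(\Om;w)$, being a closed subspace of $H^1(\Om;w)$, is also Hilbert. This is the form used later in the paper. I expect no real obstacle beyond the distributional identification of the limit described above.
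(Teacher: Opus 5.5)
Your argument is correct. It differs from the paper mainly in that the paper gives no proof at all: it calls the lemma a well-established fact and refers to the weighted Sobolev space literature \cite{GC,Heinonen}.

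You instead give a self-contained completeness proof. For $H^1(\Om;w)$, you identify the $L^2$-limit of $x_N^{\al/2}\pt_{x_N}u_n$ with $x_N^{\al/2}$ times the distributional derivative of the limit $u$. To do this you use $x_N^{-\al/2}\in L^2(\Om)$, which follows from $\al<1$ and the boundedness of $\Om$, to get convergence of $\pt_{x_N}u_n$ in $L^1(\Om)$. For $H^2(\Om;w)$, you use that $v\mapsto\int_\Om A\nabla v\cdot\nabla\vp\,\df x$ is continuous in the $H^1(\Om;w)$-norm, so $\mcA u=F$ in $\mcD'(\Om)$.

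Your route buys transparency. It makes explicit something the paper leaves implicit, namely that $\nabla u$ in the definition of $H^1(\Om;w)$ means the distributional gradient. It also isolates exactly what is used about the weight: either integrability of $x_N^{-\al}$, or, in your local variant, only that $x_N$ is bounded below on compact subsets of $\Om$, which works for every $\al>0$.

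Appealing to the general theory buys generality, and possibly further structural facts you do not address, such as density of smooth functions (the ``$H=W$'' question). None of these is needed for the lemma as stated.
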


 The following Hardy inequality will be utilized in the subsequent sections.

\begin{lemma}[Hardy's inequality]\label{11.29.L1}
	For all $u\in H_0^1(\Om;w)$, the following inequality holds
	\begin{equation*}
		\int_\Om x_N^{\al-2}u^2\df x\leq C\int_\Om x_N^\al \left|\pt_{x_N} u\right|^2\df x,
	\end{equation*}
	where the constant $C>0$ depends only on $\al$.
\end{lemma}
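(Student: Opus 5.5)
The plan is to reduce the estimate to a one-dimensional Hardy inequality in the $x_N$ variable. We prove it first for smooth compactly supported $u$ and then pass to $H_0^1(\Om;w)$ by density. By definition, $H_0^1(\Om;w)$ is the closure of $\mcD(\Om)$ in $H^1(\Om;w)$. So it suffices to prove the inequality for $u\in C_0^\iy(\Om)$ with a constant depending only on $\al$. The extension to general $u\in H_0^1(\Om;w)$ then follows by taking $u_n\in C_0^\iy(\Om)$ with $u_n\ra u$ in $H^1(\Om;w)$. Along a subsequence $u_n\ra u$ a.e., and Fatou's lemma on the left-hand side gives the result, since the right-hand side converges.

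For $u\in C_0^\iy(\Om)$, extend $u$ by zero to $\R^{N-1}\ts(0,+\iy)$. This is legitimate because $\Om\s\R^{N-1}\ts(0,+\iy)$. For fixed $x'$, the function $g(s)=u(x',s)$ is smooth and compactly supported in $(0,+\iy)$. The key one-dimensional claim is
\begin{equation*}
\int_0^\iy s^{\al-2}g^2\df s\leq \f{4}{(1-\al)^2}\int_0^\iy s^\al |g'|^2\df s,\qquad \al\in(0,1).
\end{equation*}
To prove it, write $s^{\al-2}=\f{1}{\al-1}\f{\df}{\df s}s^{\al-1}$ and integrate by parts. The boundary terms vanish because $g$ has compact support in $(0,\iy)$. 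This gives
\begin{equation*}
\int_0^\iy s^{\al-2}g^2\df s=\f{2}{1-\al}\int_0^\iy s^{\al-1}gg'\df s.
\end{equation*}
Then apply Cauchy--Schwarz to the right-hand side, splitting $s^{\al-1}=s^{(\al-2)/2}s^{\al/2}$. This yields
\begin{equation*}
\int s^{\al-2}g^2\leq \f{2}{1-\al}\Big(\int s^{\al-2}g^2\Big)^{1/2}\Big(\int s^\al|g'|^2\Big)^{1/2}.
\end{equation*}
Both integrals are finite because $g$ vanishes near $0$, so we may divide by $\big(\int s^{\al-2}g^2\big)^{1/2}$, which gives the claim. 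Finally, integrate the claim over $x'\in\R^{N-1}$ using Fubini to obtain the lemma with $C=4/(1-\al)^2$.

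The only delicate point is the density and limit step. The left-hand side is not a priori controlled by the $H^1(\Om;w)$ norm, so one cannot simply invoke continuity. Using a.e. convergence of a subsequence together with Fatou's lemma handles this cleanly. The same argument also shows that the left-hand side is finite for every $u\in H_0^1(\Om;w)$.
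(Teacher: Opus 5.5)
Your proof is correct. The overall architecture matches the paper's: prove the estimate for $u\in C_0^\infty(\Omega)$, reduce to a one-dimensional inequality in $x_N$ via Fubini, then pass to $H_0^1(\Omega;w)$ by density. The difference is in how you obtain the one-dimensional Hardy inequality.

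\emph{The paper's route.} It writes $u(x',x_N)=\int_0^{x_N}\partial_{x_N}u(x',s)\,ds$ and applies Cauchy--Schwarz with auxiliary weights $s^{\beta}$ and $s^{-\beta}$ for some $\beta\in(\alpha,1)$. It then interchanges the order of integration and uses $\int_s^M x_N^{\alpha-\beta-1}\,dx_N\le s^{\alpha-\beta}/(\beta-\alpha)$. This gives the constant $1/((1-\beta)(\beta-\alpha))$, which it optimizes at $\beta=(1+\alpha)/2$ to get $4/(1-\alpha)^2$.

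\emph{Your route.} You integrate by parts against $(s^{\alpha-1})'/(\alpha-1)$ and absorb via Cauchy--Schwarz. This reaches the same constant $4/(1-\alpha)^2$ directly, with no auxiliary parameter to optimize. The cost is that you must know the left-hand side is finite before dividing by it (and treat the trivial case where it vanishes). Both are harmless here, since $g$ is supported in a compact subset of $(0,\infty)$.

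\emph{What each buys.} The paper's argument has no absorption step. It uses only the representation from $s=0$, so it yields finiteness of the left-hand side as a conclusion rather than needing it as an input.

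\emph{The density step.} Your treatment is more careful than the paper's one-line ``density argument.'' The left-hand side is not a priori continuous with respect to the $H^1(\Omega;w)$ norm, so passing to an a.e.\ convergent subsequence and applying Fatou's lemma is exactly what is required. As you note, it also shows the left-hand side is finite on all of $H_0^1(\Omega;w)$.
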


\begin{proof}
	By employing a density argument, it suffices to prove the case for $u\in C_0^\iy(\Om)$. Naturally, $u\in C_0^\iy(\R^N)$. Let $\be\in (\al,1)$. We compute
\begin{eqnarray*}
	\int_\Om x_N^{\al-2}u^2\df x
		&&=\int_{\R^{N-1}}\int_0^M|u(x',x_N)-u(x',0)|^2x_N^{\al-2}\df x_N\df x'\\
		&&=\int_{\R^{N-1}}\int_0^M\left(\int_0^{x_N}\f{\pt u}{\pt x_N}(x',s)\df s\right)^2x_N^{\al-2}\df x_N\df x' \\
		&&\leq \int_{\R^{N-1}}\int_0^M\left(\int_0^{x_N}s^\be \left|\f{\pt u}{\pt x_N}\right|^2(x',s)\df s\right)\left(\int_0^{x_N}s^{-\be}\df s\right)x_N^{\al-2}\df x_N\df x'\\
		&&=\f{1}{1-\be}\int_{\R^{N-1}}\int_0^M x_N^{\al-\be-1}\left(\int_0^{x_N}s^\be\left|\f{\pt u}{\pt x_N}\right|^2(x',s)\df s\right)\df x_N\df x'\\
		&&=\f{1}{1-\be}\int_{\R^{N-1}}\int_0^M\int_{s}^M s^\be \left|\f{\pt u}{\pt x_N}\right|^2(x',s)x_N^{\al-\be-1}\df x_N\df s\df x'\\
		&&\leq \f{1}{(1-\be)(\be-\al)}\int_{\R^{N-1}}\int_0^M s^\al \left|\f{\pt u}{\pt x_N}\right|^2\df s\df x'\\
		&&=\f{1}{(1-\be)(\be-\al)}\int_\Om x_N^\al \left|\f{\pt u}{\pt x_N}\right|^2\df x
\end{eqnarray*}
due to   $\al\in (0,1)$, and
\begin{equation*}
	\int_s^Mx_N^{\al-\be-1}\df x_N=\f{1}{\be-\al}\left(\f{1}{s^{\be-\al}}-\f{1}{M^{\be-\al}}\right)\leq \f{1}{\be-\al}s^{\al-\be}.
\end{equation*}
Hence,
\begin{equation*}
	\int_\Om x_N^{\al-2}u^2\df x\leq \f{4}{(1-\al)^2}\int_\Om x_N^\al |\pt_{x_N} u|^2\df x
\end{equation*}
by setting $\be=\f{1+\al}{2}$. This completes the proof of the lemma.
\end{proof}

\begin{corollary}\label{11.29.C1}
 The following two statements are valid:

	(i)~ For all $u\in H_0^1(\Om; w)$,
	\begin{equation*}
		\int_\Om u^2\df x\leq C\int_\Om  \nabla u\cdot A\nabla u\df x,
	\end{equation*}
	where the constants $C>0$ depends only on $\al$ and $M$. Hereafter, we will employ the norm
	\begin{equation*}
		\|u\|_{H_0^1(\Om;w)}=\left(\int_\Om \nabla u\cdot A\nabla u\df x\right)^\f{1}{2}
	\end{equation*}
	on $H_0^1(\Om;w)$;
	
	(ii) The embedding $H_0^1(\Om;w)\hra L^2(\Om)$ is compact.
\end{corollary}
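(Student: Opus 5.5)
For (i), I would combine Lemma \ref{11.29.L1} with the bound $x_N\le M$ on $\Om$. Since $\al-2<0$ and $0<x_N<M$ on $\Om$, we have $x_N^{\al-2}\ge M^{\al-2}$ pointwise. Hence for $u\in H_0^1(\Om;w)$,
\begin{equation*}
\int_\Om u^2\df x\le M^{2-\al}\int_\Om x_N^{\al-2}u^2\df x\le \f{4M^{2-\al}}{(1-\al)^2}\int_\Om x_N^\al|\pt_{x_N}u|^2\df x\le \f{4M^{2-\al}}{(1-\al)^2}\int_\Om \nabla u\cdot A\nabla u\df x .
\end{equation*}
The last step holds because $\nabla u\cdot A\nabla u=\sum_{i<N}|\pt_{x_i}u|^2+x_N^\al|\pt_{x_N}u|^2$ dominates its last term. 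The constant depends only on $\al$ and $M$. This Poincaré-type bound makes $\left(\int_\Om\nabla u\cdot A\nabla u\right)^{1/2}$ a norm on $H_0^1(\Om;w)$ equivalent to $\|\cdot\|_{H^1(\Om;w)}$, which justifies the convention stated in (i).

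For (ii), let $\{u_n\}$ be bounded in $H_0^1(\Om;w)$, say $\|u_n\|_{H_0^1(\Om;w)}\le K$. I would split $\Om$ into a strip near the degenerate face and its complement. Fix $\e\in(0,1)$ and set $\Om^\e=\{x\in\Om\colon x_N>\e\}$.

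On the strip $\Om\setminus\Om^\e$ we have $x_N^{\al-2}\ge \e^{\al-2}$, so Lemma \ref{11.29.L1} gives
\begin{equation*}
\int_{\Om\cap\{x_N\le\e\}}u_n^2\df x\le \e^{2-\al}\int_\Om x_N^{\al-2}u_n^2\df x\le C\e^{2-\al}K^2 .
\end{equation*}
This is small uniformly in $n$, since $2-\al>0$.

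On $\Om^\e$ the weight satisfies $A\ge \min\{1,\e^\al\}I$. Therefore $\{u_n\}$ is bounded in the unweighted space $H^1(\Om^\e)$. Because $\Om$ is convex, $\Om^\e$ is a convex and hence Lipschitz domain, so Rellich–Kondrachov gives relative compactness of $\{u_n|_{\Om^\e}\}$ in $L^2(\Om^\e)$. Alternatively, one can avoid any boundary regularity by extending $u_n$ by zero, which is legitimate by the density of $C_0^\iy$ functions, and using translation estimates.

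Finally I would run a diagonal argument over $\e=1/k$. This extracts a subsequence that is Cauchy in $L^2(\Om^{1/k})$ for every $k$. Combined with the uniform smallness on the strips, the subsequence is Cauchy in $L^2(\Om)$.

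The main point needing care is the compactness on $\Om^\e$. If Lipschitz regularity of $\Om^\e$ is in doubt, I would instead use the zero extension: $\tilde u_n=u_n$ on $\Om$ and $0$ elsewhere is bounded in $H^1(\R^{N-1}\times(\e,\iy))$ after cutting off near $x_N=\e$. This can be done with a smooth cutoff $\eta(x_N)$ whose error term $|\eta'|u_n$ is controlled by the Hardy bound.
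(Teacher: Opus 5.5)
Your proposal is correct and follows essentially the same route as the paper. Part (i) is Hardy's inequality combined with $x_N<M$ on $\Om$. Part (ii) splits off a thin layer near $\{x_N=0\}$, whose $L^2$ mass is uniformly $O(\e^{2-\al})$ by Hardy, and applies classical Rellich--Kondrachov on the region where the weight is bounded below. The remaining differences are cosmetic: the paper works with the $C^3$ domains $\Om_\de$ and normalizes the weak limit to $0$, while you cut with the convex (hence Lipschitz) set $\{x_N>\e\}$ and close with a diagonal Cauchy argument; both are valid.
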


\begin{proof}
	(i) follows directly from Lemma \ref{11.29.L1}.
	
	(ii) Let $\{u_n\}_{n\in\N}\s H_0^1(\Om;w)$ be a bounded sequence, i.e., $\|u_n\|_{H_0^1(\Om;w)}\leq C$ for all $n\in\N$
 with a fixed constant  $C>0$. Then, there exists a subsequence of $\{u_n\}_{n\in\N}$, still denoted by itself, and $u_0\in H_0^1(\Om;w)$ such that
	\begin{equation*}
		u_n\ra u_0 \mbox{ weakly in } H_0^1(\Om;w).
	\end{equation*}
	 Now, we will demonstrate $u_n\ra u_0$ in $L^2(\Om)$
by considering an abstract subsequence. We assume  $u_0=0$. If not, we replace $u_n$ with $u_n-u_0$ for all $n\in\N$.
	
	Let $\e>0$. One one side, for all $\de\in (0, \de_0)$, we obtain
	\begin{equation*}
		\begin{split}
		\int_{\Om-\Om_\de} u_n^2\df x
		&=\int_{\Om-\Om_\de} x_N^{2-\al}x_N^{\al-2}u_n^2\df x<\de^{2-\al}\|u_n\|_{H_0^1(\Om;w)}^2\leq C^2\de^{2-\al}.
		\end{split}
	\end{equation*}
	Selecting $\wh\de_0\in (0,\de_0)$  such that for all $\de\in (0,\wh \de_0]$ and all $n\in\N$ we have  $\int_{\Om-\Om_\de}u_n^2\df x<\f{1}{4}\e^2$. On the other hand, we consider the sequence $\{u_n|_{\Om_{\wh \de_0}}\}_{ n\in\N}$.   Hence, from
	\begin{equation*}
		\wh \de_0^\al \int_{\Om_{\wh\de_0}} |\nabla u|^2\df x\leq \int_{\Om_{\wh \de}} |\nabla u|^2x_N^\al \df x\leq C
	\end{equation*}
	we deduce that  $\{u_n|_{\Om_{\wh \de_0}}\}_{n\in\N}\s H^1(\Om_{\wh\de_0})$ is a bounded sequence.
By the classical Sobolev compact embedding theorem, there exists a subsequence $\{u_{n_k}|_{\Om_{\wh\de_0}}\}_{k\in\N}$ of  $\{u_n|_{\Om_{\wh\de_0}}\}_{n\in\N}$  such that $u_{n_k}|_{\Om_{\wh\de_0}}\ra 0$ strongly in $L^2(\Om_{\wh\de_0})$.  Moreover, there exists $k_0\in\N$ such that for all $k\geq k_0$, we have
	\begin{equation*}
		\left\|u_{n_k}|_{\Om_{\wh\de_0}}\right\|_{L^2(\Om_\e)}<\f{1}{2}\e.
	\end{equation*}
From these two aspects, we obtain the desired result.
\end{proof}

\subsection{Solution of degenerate elliptic equation, regularity}

 In this subsection, we demonstrate the existence of a weak solution for the degenerate elliptic equation, a result that is well-established in the literature.

Consider the equation given by
\begin{equation}\label{12.05.1}
	\begin{cases}
		-\mathcal{A} u = f, & \text{in } \Omega, \\
		u = 0, & \text{on } \partial\Omega,
	\end{cases}
\end{equation}
where $\mcA$ is given by \dref{02.05.1} and  $f \in L^2(\Omega)$. We define a weak solution $u \in H_0^1(\Omega;w)$ of \eqref{12.05.1} with respect to $f$ as one that satisfies
\begin{equation}
	\int_{\Omega} \nabla u \cdot A \nabla v \, dx = \int_{\Omega} fv \, dx
\end{equation}
for all $v \in H_0^1(\Omega;w)$.

\begin{lemma}\label{12.05.L1}
	Let $f \in L^2(\Omega)$. Then, there exists a unique solution $u \in H_0^1(\Omega;w) \cap H^2(\Omega;w)$ to the equation \eqref{12.05.1}.
\end{lemma}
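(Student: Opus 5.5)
The plan is to apply the Lax--Milgram theorem (equivalently, the Riesz representation theorem) on the Hilbert space $H_0^1(\Om;w)$. By Lemma \ref{Le.g1.1} and Corollary \ref{11.29.C1}(i), $H_0^1(\Om;w)$ is a Hilbert space with the equivalent norm $\|u\|_{H_0^1(\Om;w)}=\left(\int_\Om \nabla u\cdot A\nabla u\df x\right)^{1/2}$. Consider the bilinear form
\begin{equation*}
a(u,v)=\int_\Om \nabla u\cdot A\nabla v\df x .
\end{equation*}
It is exactly the inner product associated with this norm, so it is bounded and coercive with constant $1$.

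Next, for $f\in L^2(\Om)$, the functional $\ell(v)=\int_\Om fv\df x$ is continuous on $H_0^1(\Om;w)$. Indeed, by Cauchy--Schwarz and Corollary \ref{11.29.C1}(i),
\begin{equation*}
|\ell(v)|\leq \|f\|_{L^2(\Om)}\|v\|_{L^2(\Om)}\leq C\|f\|_{L^2(\Om)}\|v\|_{H_0^1(\Om;w)},
\end{equation*}
where $C$ depends only on $\al$ and $M$. Riesz representation then gives a unique $u\in H_0^1(\Om;w)$ with $a(u,v)=\ell(v)$ for all $v\in H_0^1(\Om;w)$, i.e.\ a unique weak solution. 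Uniqueness is immediate: the difference $w$ of two solutions satisfies $a(w,w)=0$, so $w=0$ by the norm equivalence.

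It remains to show $u\in H^2(\Om;w)$, i.e.\ that $\mcA u\in L^2(\Om)$, where $\mcA u$ is understood in the distributional sense. Taking $v=\varphi\in C_0^\iy(\Om)\subset H_0^1(\Om;w)$ in the weak formulation gives
\begin{equation*}
\lg -\Div(A\nabla u),\varphi\rg=\int_\Om \nabla u\cdot A\nabla\varphi\df x=\int_\Om f\varphi\df x .
\end{equation*}
Thus $\mcA u=f$ in $\mcD'(\Om)$, up to the sign convention in \eqref{12.05.1}, and hence $\mcA u\in L^2(\Om)$. The definition of $H^2(\Om;w)$ requires only $u\in H^1(\Om;w)$ and $\mcA u\in L^2(\Om)$, so $u\in H_0^1(\Om;w)\cap H^2(\Om;w)$. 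Moreover $\|u\|_{H^2(\Om;w)}\leq C\|f\|_{L^2(\Om)}$, using $\|u\|_{H_0^1(\Om;w)}^2=\ell(u)\leq C\|f\|\,\|u\|$.

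The only subtle point is making sure that $\mcA u$ in the definition of $H^2(\Om;w)$ is interpreted distributionally. Note that $u$ is locally $H^1$ in $\Om$, since $x_N$ is bounded below on compact subsets, so $\Div(A\nabla u)$ is a well-defined distribution. Once this is settled, no genuine regularity theory near the degenerate boundary is needed, and there is no real obstacle.
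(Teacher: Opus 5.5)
Your proposal is correct and follows the same route as the paper: Lax--Milgram (here simply Riesz, since the bilinear form is the inner product itself) on $H_0^1(\Omega;w)$, with continuity of $v\mapsto\int_\Omega fv\,dx$ coming from Corollary~\ref{11.29.C1}(i). The paper only asserts that $u\in H^2(\Omega;w)$ because $f\in L^2(\Omega)$. Your step of testing against $C_0^\infty(\Omega)$ to get $\mathcal{A}u=f$ in $\mathcal{D}'(\Omega)$ is exactly the justification that sentence leaves implicit. You are also right that the weak formulation corresponds to $\mathcal{A}u=f$, so the sign in \eqref{12.05.1} does not matter.
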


\begin{proof}
	Define the bilinear form
	\begin{equation*}
		B[u,v] = \int_{\Omega} \nabla u \cdot A \nabla v \, dx, \quad \text{for } u, v \in H_0^1(\Omega;w).
	\end{equation*}
	It is clear that $|B[u,v]| \leq \|u\|_{H_0^1(\Omega;w)}\|v\|_{H_0^1(\Omega;w)}$, and
	\begin{equation*}
		B[u,u] = \int_{\Omega} \nabla u \cdot A \nabla u \, dx = \|u\|_{H_0^1(\Omega;w)}^2.
	\end{equation*}
	From Corollary \ref{11.29.C1} (i), we have
	\begin{equation*}
		\int_{\Omega} fv \, dx \leq \|f\|_{L^2(\Omega)}\|v\|_{L^2(\Omega)} \leq C\|f\|_{L^2(\Omega)}\|v\|_{H_0^1(\Omega;w)}.
	\end{equation*}
	This implies that the functional $f: H_0^1(\Omega;w) \to \mathbb{R}$ is bounded and linear. Consequently, by the Lax-Milgram theorem, there exists a unique weak solution to \eqref{12.05.1}.
	
	Finally, since $f \in L^2(\Omega)$, it follows that $u \in H^2(\Omega;w)$. This completes the proof of the lemma.
\end{proof}

\begin{remark}\label{12.27.R1}
	It can be readily verified that $\mathcal{A}: H_0^1(\Omega;w) \to H^{-1}(\Omega;w)$ define by \dref{02.05.1} is a bounded linear operator. Indeed, for each $u \in H_0^1(\Omega;w)$ and every $v \in C_0^\infty(\Omega)$, we have $\langle \mathcal{A} u, v \rangle_{H^{-1}(\Omega;w), H_0^1(\Omega;w)} = (u, v)_{H_0^1(\Omega;w)}$ in the sense of distributions. Therefore, for all $u, v \in H_0^1(\Omega;w)$, we have
	\begin{equation*}
		\langle \mathcal{A} u, v \rangle_{H^{-1}(\Omega;w), H_0^1(\Omega;w)} = (u, v)_{H_0^1(\Omega;w)}
	\end{equation*}
	by the density of $C_0^\infty(\Omega)$ in $H_0^1(\Omega;w)$. Consequently,
	\begin{equation*}
		\begin{split}
			\|\mathcal{A} u\|_{H^{-1}(\Omega;w)} &= \sup_{\|v\|_{H_0^1(\Omega;w)} \leq 1} \langle \mathcal{A} u, v \rangle_{H^{-1}(\Omega;w), H_0^1(\Omega;w)} \\
			&\leq \sup_{\|v\|_{H_0^1(\Omega;w)} \leq 1} (u, v)_{H_0^1(\Omega;w)} \leq \|u\|_{H_0^1(\Omega;w)}.
		\end{split}
	\end{equation*}
\end{remark}

\subsection{Eigenvalues and eigenfunctions}

To derive estimates for the weak solution of \eqref{01.08.1}, we first introduce the eigenvalues and eigenfunctions associated with the corresponding differential operator $\mcA$ defined in {\color{red} \eqref{02.05.1}}.

\begin{notation}\label{11.29.N1}
	 Based on Corollary \ref{11.29.C1}, the partial differential operator
$\mcA$ defined by  \dref{02.05.1}
 possesses a discrete point spectrum given by
	\begin{equation*}
		0<\la_1<\la_2\leq \la_3\leq \cdots \ra +\iy.
	\end{equation*}
	 Furthermore, from Corollary \ref{11.29.C1} (i), we obtain
	\begin{equation*}
		0<\la_1=\inf_{0\neq u\in H_0^1(\Om;w)}\f{\int_\Om \nabla u\cdot A\nabla u\df x}{\int_\Om u^2\df x}.
	\end{equation*}
	 We designate  $\Phi_n$ (for $n\in\N)$  as the eigenfunction of
$\mcA$
 corresponding to the eigenvalue  $\la_n$. That is, $\la_n$ and $\Phi_n$ satisfies the following equation
	\begin{equation}\label{12.23.3}
		\begin{cases}
		\mcA \Phi_n=\la_n \Phi_n, &\mbox{in }\Om,\\
		\Phi_n=0, &\mbox{on }\pt\Om.
		\end{cases}
	\end{equation}
	 Hereafter, we denote $\{\Phi_n\}_{n\in\N}$  as the orthonormal basis of  $L^2(\Om)$.  Additionally,
 $\{\Phi_n\}_{n\in\N}$  forms an orthogonal subset of $H_0^1(\Om;w)$ (see, e.g.,
  \cite[Theorem 7, Appendix D, p. 728]{Evans} or Lemma \ref{06.28.L1} below). It is important to note that
$\Phi_n\in D(\mcA)$  for all $n\in\N$,  as established by Lemma \ref{12.05.L1}.
\end{notation}

\begin{lemma}\label{06.28.L1}  Let $\mcA$ be defined as \dref{02.05.1}.
 	Let $u=\sum_{i=1}^\iy u_i \Phi_i\in H_0^1(\Om;w)$ with $u_i=(u,\Phi_i)_{L^2(\Om)}$ for all $i\in\N$.
Then,  $\nabla u=\sum_{i=1}^\iy u_i\nabla \Phi_i$ and $\|u\|_{H_0^1(\Om;w)}=(\sum_{i=1}^\iy u_i^2\la_i)^\f{1}{2}$. Moreover,
	\begin{equation*}
		u\in H^2(\Om;w)\Lra \sum_{i=1}^\iy u_i^2\la_i^2<\iy,
	\end{equation*}
	and
	\begin{equation*}
		\mcA u=\sum_{i=1}^\iy u_i\la_i\Phi_i, \mbox{ and } \|\mcA u\|_{L^2(\Om)}=\left(\sum_{i=1}^\iy u_i^2\la_i^2\right)^\f{1}{2}.
	\end{equation*}
\end{lemma}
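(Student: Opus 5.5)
The plan is to use the spectral structure from Notation \ref{11.29.N1}: $\{\Phi_i\}$ is an orthonormal basis of $L^2(\Om)$ and, by the weak eigenvalue equation, an orthogonal family in $H_0^1(\Om;w)$. First, applying the weak formulation of \eqref{12.23.3} with test function $v=\Phi_j$ gives $(\Phi_i,\Phi_j)_{H_0^1(\Om;w)}=\la_i\delta_{ij}$. Hence $\{\Phi_i/\sqrt{\la_i}\}$ is an orthonormal family in $H_0^1(\Om;w)$.

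Next I will show this family is complete in $H_0^1(\Om;w)$. Suppose $v\in H_0^1(\Om;w)$ satisfies $(v,\Phi_i)_{H_0^1(\Om;w)}=0$ for all $i$. Then $\la_i(v,\Phi_i)_{L^2(\Om)}=0$, so $(v,\Phi_i)_{L^2}=0$ for every $i$, and completeness in $L^2(\Om)$ forces $v=0$. Therefore every $u\in H_0^1(\Om;w)$ expands as
\begin{equation*}
u=\sum_i \frac{(u,\Phi_i)_{H_0^1(\Om;w)}}{\la_i}\Phi_i \quad\text{in } H_0^1(\Om;w).
\end{equation*}
Since $(u,\Phi_i)_{H_0^1(\Om;w)}=\la_i u_i$ by the weak eigen-equation, the coefficients are exactly $u_i$. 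Convergence in $H_0^1(\Om;w)$ means $A^{1/2}\nabla$ of the partial sums converges in $L^2(\Om)$. Because $x_N^{\al}>0$ a.e., this gives $\nabla u=\sum u_i\nabla\Phi_i$ in $L^2_{loc}$, indeed in $L^2(\Om;A)$. Parseval then yields $\|u\|^2_{H_0^1(\Om;w)}=\sum u_i^2\la_i$.

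For the $H^2$ characterization, take $u\in H^2(\Om;w)$ with $g=\mcA u\in L^2(\Om)$, understood in the sense of Remark \ref{12.27.R1}, i.e. $\langle \mcA u,v\rangle=(u,v)_{H_0^1(\Om;w)}$. Testing with $\Phi_i$ gives
\begin{equation*}
(g,\Phi_i)_{L^2(\Om)}=(u,\Phi_i)_{H_0^1(\Om;w)}=\la_iu_i.
\end{equation*}
So $g=\sum \la_iu_i\Phi_i$, and Parseval in $L^2$ gives both $\sum u_i^2\la_i^2=\|g\|^2<\iy$ and the norm identity. Conversely, suppose $\sum u_i^2\la_i^2<\iy$. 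Set $g=\sum\la_iu_i\Phi_i\in L^2(\Om)$ and let $\tilde u$ be the solution of \eqref{12.05.1} from Lemma \ref{12.05.L1}, with the sign convention matching $\mcA \tilde u=g$. Computing coefficients of $\tilde u$ as above gives $\la_i\tilde u_i=\la_i u_i$, so $\tilde u=u$, whence $u\in H^2(\Om;w)$ and $\mcA u=g$.

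The only delicate step is passing from abstract convergence in $H_0^1(\Om;w)$ to the identity for $\nabla u$, because the weight degenerates at $x_N=0$. I handle it by interpreting the series in the weighted space and using the positivity of the weight a.e. The remaining steps are routine Hilbert space arguments.
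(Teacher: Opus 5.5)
Your proposal is correct and follows essentially the same route as the paper: orthonormality and completeness of $\{\la_i^{-1/2}\Phi_i\}$ in $H_0^1(\Om;w)$ via the weak eigenvalue equation, then Parseval for the gradient expansion and the norm identity. The differences are only cosmetic.
\begin{itemize}
\item In the forward $H^2$ direction you read off $(\mcA u,\Phi_i)_{L^2(\Om)}=\la_iu_i$ directly, while the paper applies Cauchy--Schwarz against partial sums.
\item For the converse you use Lemma~\ref{12.05.L1} with coefficient matching, while the paper computes $\mcA u$ distributionally.
\item The $L^2_{loc}$ convergence of the gradients follows because $x_N$ is bounded below on compact subsets of $\Om$. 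Positivity of the weight a.e.\ alone would not give it.
\end{itemize}
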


\begin{proof}
	From \eqref{12.23.3}, we obtain
	\begin{equation}\label{06.04.3}
		\int_\Om \nabla \Phi_k\cdot A\nabla \Phi_l\df x=\de_{kl}\la_k \mbox{ for } k,l\in \N.
	\end{equation}
	 We now prove that $\{\la_k^{-\f{1}{2}}\Phi_k\}_{k=1}^\iy$ forms an orthonormal basis of $H_0^1(\Om;w)$.
	
From \eqref{06.04.3},  it is easy to verify that $\{\la_k^{-\f{1}{2}}\Phi_k\}_{k=1}^\iy$ is an orthonormal subset of $H_0^1(\Om;w)$.
 To show that it is a basis, assume, for the sake of contradiction, that there exists  $0\neq u\in H_0^1(\Om;w)$ such that
	\begin{equation*}
		\int_\Om \nabla \Phi_k\cdot A\nabla u\df x=0 \mbox{ for all }k\in\N.
	\end{equation*}
	
Since $\{\Phi_k\}_{k\in\N}$ is an orthonormal basis of $L^2(\Om)$, for $u\in H_0^1(\Om;w)$,  we can write
	\begin{equation}\label{06.04.4}
		u=\sum_{k=1}^\iy d_k\Phi_k \mbox{ where }  d_k=(u, \Phi_k)_{L^2(\Om)}, k\in\N.
	\end{equation}
	Then  we have
	\begin{equation*}
		0=(u,\Phi_k)_{H_0^1(\Om;w)}=\int_\Om \nabla \Phi_k\cdot A\nabla u\df x=\la_k\int_\Om \Phi_ku\df x =\la_kd_k,
	\end{equation*}
	which implies $d_k=0$. This leads to $u=0$, a contradiction.
	
	Since $u\in H_0^1(\Om;w)$,  we have
	\begin{equation*}
		\nabla u=\sum_{i=1}^\iy e_i\la_i^{-\f{1}{2}}\nabla\Phi_k, \mbox{ and } \|u\|_{H_0^1(\Om;w)}^2=\sum_{i=1}^\iy e_i^2,
	\end{equation*}
	and
	\begin{equation*}
		e_i=\int_\Om \nabla u\cdot \left(A \la_i^{-\f{1}{2}}\nabla\Phi_i\right) \df x=\la_i^{-\f{1}{2}}\int_\Om \nabla u\cdot A\nabla \Phi_i\df x=\la_i^\f{1}{2}\int_\Om u\Phi_i\df x=\la_i^\f{1}{2}u_i,
	\end{equation*}
	hence,  $\nabla u=\sum_{i=1}^\iy u_i\nabla\Phi_i$. Moreover, $\|u\|_{H_0^1(\Om;w)}=(\sum_{i=1}^\iy \la_iu_i^2)^\f{1}{2}$.
	
	Let $u\in H^2(\Om;w)$. Take $\vp_n=\sum_{i=1}^n u_i\Phi_i\in H_0^1(\Om;w)\cap H^2(\Om;w)$ for each $n\in\N$. Then  from
	\begin{equation*}
		\begin{split}
			(\mcA u, \mcA\vp_n)_{L^2(\Om)}
			&=\sum_{i=1}^n u_i\la_i \int_\Om (\mcA u)\Phi_i\df x=\sum_{i=1}^n u_i\la_i\int_\Om u\mcA\Phi_i\df x\\
			&=\sum_{i=1}^n u_i\la_i^2\int_\Om u\Phi_i\df x=\sum_{i=1}^n u_i^2\la_i^2
		\end{split}
	\end{equation*}
	and $\|\mcA\vp_n\|_{L^2(\Om)}^2=\sum_{i=1}^n u_i^2\la_i^2$, by the Cauchy-Schwarz inequality, we get
	\begin{equation*}
		\sum_{i=1}^nu_i^2\la_i^2\leq \|\mcA u\|_{L^2(\Om)}^2
	\end{equation*}
	for all $n\in\N$. This implies that $\sum_{i=1}^\iy u_i^2\la_i^2\leq \|\mcA u\|_{L^2(\Om)}^2<\iy$.

	Let $\sum_{i=1}^\iy u_i^2\la_i^2<\iy$. For each $\vp\in C_0^\iy(\Om)$,  we have
	\begin{equation*}
		\begin{split}
			(\mcA u, \vp)_{L^2(\Om)}
			&=\int_\Om \nabla u\cdot A\nabla\vp\df x=\sum_{i=1}^\iy u_i \int_\Om \nabla\Phi_i\cdot A\nabla\vp\df x=\sum_{i=1}^\iy u_i\la_i\int_\Om \Phi_i\vp\df x.
		\end{split}
	\end{equation*}
	Note that
	\begin{equation*}
		\int_\Om \left(\sum_{i=1}^n u_i\la_i\Phi_i\right)^2\df x=\sum_{i=1}^n u_i^2\la_i^2\leq \sum_{i=1}^\iy u_i^2\la_i^2<\iy \mbox{ for all } n\in\N,
	\end{equation*}
	i.e., $\sum_{i=1}^\iy u_i\la_i\Phi_i\in L^2(\Om)$. Hence
	\begin{equation*}
		(\mcA u, \vp)_{L^2(\Om)}=\left(\sum_{i=1}^\iy u_i\la_i\Phi_i, \vp\right)_{L^2(\Om)}.
	\end{equation*}
	This implies that $\|\mcA u\|_{L^2(\Om)}\leq \sum_{i=1}^\iy u_i^2\la_i^2$ and $\mcA u=\sum_{i=1}^\iy u_i\la_i\Phi_i$.
\end{proof}

\subsection{Existence of weak solution}

 In this subsection, we establish the existence of a weak solution to equation \eqref{01.08.1} and elucidate the dependency of the constants in the corresponding a priori estimates.

\begin{definition}\label{01.27.D1}
	We define a weak solution $y$   of the equation \eqref{01.08.1} with respect to  $(y^0,f)$
	\begin{equation*}
		y\in L^2(0,T; H_0^1(\Om;w)), \mbox{ with } \pt_ty\in L^2(0,T; H^{-1}(\Om;w))
	\end{equation*}
	  satisfying
\begin{itemize}

\item[(i)]  for every $v\in H_0^1(\Om;w)$, and for almost every $t\in [0,T]$
	\begin{equation*}
		\lg \pt_ty, v\rg_{H^{-1}(\Om;w), H_0^1(\Om;w)}+(y,v)_{H_0^1(\Om;w)}=(f,v)_{L^2(\Om)};
	\end{equation*}
	
\item[(ii)] $y(0)=y^0$.
\end{itemize}
\end{definition}

 The following theorem represents one of the primary results of this paper.

\begin{theorem}\label{01.11.T1}
	Let $\mcA$ be given by \dref{02.05.1}. Let $\al\in (0,1)$, $y^0\in L^2(\Om)$ and $f\in L^2(Q)$. Then  the equation \eqref{01.08.1}
 with respect to $(y^0,f)$ has a unique weak solution
	\begin{equation*}
		y\in L^2(0,T; H_0^1(\Om;w)), \mbox{ with } \pt_ty\in L^2(0,T; H^{-1}(\Om;w))
	\end{equation*}
	satisfies the estimate
	\begin{equation}\label{01.11.7}
		\begin{split}
			&\esssup_{t\in [0,T]}\|y(t)\|_{L^2(\Om)}+\|y\|_{L^2(0,T;H_0^1(\Om;w))}+\|\pt_ty\|_{L^2(0,T; H^{-1}(\Om;w))}\\
			&\leq C\left(\|f\|_{L^2(Q)}+\|y^0\|_{L^2(\Om)}\right),
		\end{split}
	\end{equation}
	where the constant $C>0$ depends only on $\al, N, T$ and $M$ and the shape of  $\Om_{\de_0}$.
	
	 Furthermore, if $y^0\in H_0^1(\Om;w)$,  then the weak solution
	\begin{equation*}
		y\in L^2(0,T; D(\mcA))\cap L^2(0,T; H_0^1(\Om;w)), \mbox{ with } \pt_ty\in L^2(Q),
	\end{equation*}
	 satisfies the estimate
	\begin{equation}\label{01.11.8}
		\begin{split}
			&\esssup_{t\in [0,T]}\|y(t)\|_{H_0^1(\Om;w)}+\|y\|_{L^2(0,T; H^2(\mcO(\Ga^+,\de_0)))}+\|y\|_{L^2(0,T; D(\mcA))}+\|\pt_ty\|_{L^2(Q)}\\
			&\leq C\left(\|f\|_{L^2(Q)}+\|y^0\|_{H_0^1(\Om;w)}\right),
		\end{split}
	\end{equation}
	where the constant $C>0$ depends only on $\al, N, T$ and $M$,  and the shape of  $\Om_{\de_0}$.
\end{theorem}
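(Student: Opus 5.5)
We will argue by a Faedo--Galerkin scheme built on the eigenbasis $\{\Phi_n\}$ of Notation \ref{11.29.N1}. Writing $y^0=\sum_n y^0_n\Phi_n$ and $f(t)=\sum_n f_n(t)\Phi_n$, we set
\begin{equation*}
y(t)=\sum_{n=1}^\iy y_n(t)\Phi_n, \qquad y_n(t)=e^{-\la_nt}y^0_n+\int_0^te^{-\la_n(t-s)}f_n(s)\df s .
\end{equation*}
Equivalently, $y$ is the limit of the truncations $y^{(m)}=\sum_{n\le m}y_n\Phi_n$, which solve the projected problem exactly. By Lemma \ref{06.28.L1}, all the norms involved are diagonal in this basis: $\|u\|_{H_0^1(\Om;w)}^2=\sum_n\la_nu_n^2$, $\|\mcA u\|_{L^2}^2=\sum_n\la_n^2u_n^2$, and $\|u\|_{H^{-1}(\Om;w)}^2=\sum_n\la_n^{-1}u_n^2$. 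The a priori estimates therefore reduce to scalar ODE estimates, applied uniformly in $n$.

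\textbf{First estimate \eqref{01.11.7}.} Multiplying the $n$-th ODE $y_n'+\la_ny_n=f_n$ by $y_n$ and summing gives
\begin{equation*}
\f12\f{\df}{\df t}\|y\|_{L^2}^2+\|y\|_{H_0^1(\Om;w)}^2=(f,y)_{L^2}\le \|f\|_{L^2}\|y\|_{L^2}.
\end{equation*}
We bound the right-hand side by $\f{C}{2}\|f\|^2+\f12\|y\|_{H_0^1(\Om;w)}^2$ using Corollary \ref{11.29.C1}(i), or more simply use $\|y\|_{L^2}\le\f12\|y\|^2+\f12\|f\|^2$ together with Gronwall. This yields the sup-in-time $L^2$ bound and the $L^2(0,T;H_0^1(\Om;w))$ bound. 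For the time derivative, the equation $\pt_ty=f-\mcA y$ in $H^{-1}(\Om;w)$, combined with Remark \ref{12.27.R1} and the embedding $L^2\hookrightarrow H^{-1}(\Om;w)$ (again from Corollary \ref{11.29.C1}(i)), gives the bound on $\pt_ty$. With these bounds, passing to the limit in the truncations yields a weak solution in the sense of Definition \ref{01.27.D1}. The initial condition makes sense because $y\in C([0,T];L^2)$, by the standard Lions--Magenes lemma for the Gelfand triple $H_0^1(\Om;w)\subset L^2\subset H^{-1}(\Om;w)$. Uniqueness follows by applying the energy identity to the difference of two solutions, which has zero data.

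\textbf{Second estimate \eqref{01.11.8}.} For $y^0\in H_0^1(\Om;w)$, we multiply the ODE by $\la_ny_n$ (formally, we test with $\mcA y$) to get
\begin{equation*}
\f12\f{\df}{\df t}\|y\|_{H_0^1(\Om;w)}^2+\|\mcA y\|_{L^2}^2=(f,\mcA y)\le\f12\|f\|^2+\f12\|\mcA y\|^2 .
\end{equation*}
This gives the sup-in-time $H_0^1(\Om;w)$ bound and the $L^2(0,T;D(\mcA))$ bound. Then $\pt_ty=f-\mcA y\in L^2(Q)$ gives the bound on $\|\pt_ty\|_{L^2(Q)}$. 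Throughout, the constants come only from Corollary \ref{11.29.C1}, i.e.\ from $\al$ and $M$, and from $T$.

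\textbf{Main obstacle: the local $H^2$ bound on $\mcO(\Ga^+,\de_0)$.} This is where the shape of $\Om_{\de_0}$ enters. On $\Om_{\de_0/2}$ we have $x_N\ge\de_0/2$, so the operator is uniformly elliptic there, with $C^{0,1}$ (indeed smooth) coefficients. We take a cutoff $\chi\in C^\iy$ with $\chi=1$ on $\mcO(\Ga^+,\de_0)$ and $\supp\chi\cap\Om\subset\Om_{\de_0/2}$, and set $z=\chi y$. For a.e.\ $t$, $z$ solves
\begin{equation*}
-\Div(A\nabla z)=\chi(f-\pt_ty)-2A\nabla\chi\cdot\nabla y-y\,\Div(A\nabla\chi)
\end{equation*}
in $\Om_{\de_0/2}$, with $z=0$ on $\pt\Om_{\de_0/2}$. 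On the part of the support touching $\pt\Om$ near $\Ga^+$, this is the true Dirichlet boundary; elsewhere $z$ vanishes because of the cutoff. The right-hand side is bounded in $L^2$ by $\|f\|+\|\pt_ty\|+C(\de_0)\|y\|_{H_0^1(\Om;w)}$, since $\nabla y\in L^2$ on the support with weight $x_N^\al\ge(\de_0/2)^\al$. Classical $H^2$ regularity for uniformly elliptic Dirichlet problems on the $C^3$ domain $\Om_{\de_0/2}$ then gives
\begin{equation*}
\|y\|_{H^2(\mcO(\Ga^+,\de_0))}\le C\bigl(\|f\|+\|\pt_ty\|+\|y\|_{H_0^1(\Om;w)}\bigr),
\end{equation*}
with $C$ depending on $\al$, $\de_0$ and the shape of $\Om_{\de_0}$. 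Squaring and integrating in time, and using the bounds already obtained, finishes the proof. The delicate point is to check that the cutoff can be chosen so that the boundary of the support stays inside the region where $\pt\Om$ is $C^3$ and nondegenerate, which we take from the construction in Notation \ref{01.27.N1}.
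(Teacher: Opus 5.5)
Your proposal is correct and follows essentially the paper's proof: a Galerkin/eigenfunction expansion in $\{\Phi_n\}$, the $L^2$ energy estimate with the $H^{-1}(\Om;w)$ bound read off from the equation, uniqueness by the energy identity, a second-level energy estimate, and a cutoff $\chi y$ combined with uniformly elliptic $H^2$ regularity on a subdomain where $x_N$ is bounded below. The differences are minor. You test with $\mcA y$, obtaining $\|\mcA y\|_{L^2(Q)}$ first and then $\pt_t y=f-\mcA y$, whereas the paper tests with $\pt_t y$ and recovers $\mcA y$ from the equation. Your condition $\supp\chi\cap\Om\subset\Om_{\de_0/2}$ also makes explicit the Dirichlet condition for $\chi y$ on the interior part of the subdomain's boundary, which the paper's cutoff $\zeta(x_N)$ on $\Om_{\de_0}$ leaves implicit.
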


\begin{proof}
	 We prove this theorem through the following six steps.
	
	{\it Step 1: Galerkin method.}
	
	Let $m\in\N^*$, and define
	\begin{equation*}
		y^m(x,t)=\sum_{n=1}^m y_n^m(t)\Phi_n(x),
	\end{equation*}
	where $y_n^m(t), t\in [0,T], n=1,\cdots, m$, is the solution to the equation
	\begin{equation}\label{01.11.1}
		\f{\df}{\df t}y_n^m(t)+\la_n y_n^m(t)=f_n(t), \mbox{ with } f_n(t)=(f,\Phi_n)_{L^2(\Om)}, n\in\N,
	\end{equation}
	with initial data
	\begin{equation}\label{01.11.2}
		y_n^m(0)=(y^0,\Phi_n)_{L^2(\Om)}, \ n=1,\cdots, m.
	\end{equation}
	 It is evident that
	\begin{equation*}
		y_n^m(t)=y_n^m(0)e^{-\la_nt}+\int_0^te^{\la_n(s-t)}f_n(s)\df s, \ n=1,\cdots, m.
	\end{equation*}
	Note that \eqref{01.11.1} is equivalent to the following equality
	\begin{equation}\label{01.11.3}
		(\pt_ty^m,\Phi_n)_{L^2(\Om)}+(y^m,\Phi_n)_{H_0^1(\Om;w)}=(f,\Phi_n)_{L^2(\Om)}, \ t\in [0,T], n=1,\cdots, m.
	\end{equation}
	
	{\it Step 2: Energy estimate.}
	
	Multiplying \eqref{01.11.3} by $y_n^m(t)$ and summing for $n=1,\cdots, m$,  we obtain
	\begin{equation*}
		(\pt_ty^m, y^m)_{L^2(\Om)}+(y^m,y^m)_{H_0^1(\Om;w)}=(f,y^m)_{L^2(\Om)} \mbox{ for a.e.}\ t\in [0,T].
	\end{equation*}
	Noting  that $(\pt_ty^m,y^m)_{L^2(\Om)}=\f{1}{2}\f{\df}{\df t}\|y^m\|_{L^2(\Om)}^2$,  and  $(f,y^m)_{L^2(\Om)}\leq C\|f\|_{L^2(\Om)}^2+C\|y^m\|_{L^2(\Om)}^2\leq C\|f\|_{L^2(\Om)}^2+\f{1}{2}\|y^m\|_{H_0^1(\Om;w)}^2$ by   Corollary \ref{11.29.C1} (i), we get
	\begin{equation*}
		\f{\df}{\df t}\|y^m\|_{L^2(\Om)}^2+\|y^m\|_{H_0^1(\Om;w)}^2\leq C\|f\|_{L^2(\Om)}^2,
	\end{equation*}
	where the constant $C>0$ depends only on $\al, N$ and $M$. Integrating over $[0,t]$ for any $t\in (0,T]$, we obtain
	\begin{equation}\label{01.11.4}
		\begin{split}
			\esssup_{t\in [0,T]}\|y^m(t)\|_{L^2(\Om)}+\|y^m\|_{L^2(0,T; H_0^1(\Om;w))}\leq \|y^0\|_{L^2(\Om)}+C\|f\|_{L^2(Q)},
		\end{split}
	\end{equation}
	where the constant $C>0$ depends only on $\al, N$ and $M$.
	
	For each $v\in H_0^1(\Om;w)$ with $\|v\|_{H_0^1(\Om;w)}\leq 1$, write $v=v^1+v^2$, where $v^1\in \Span \{\Phi_n\}_{n=1}^m$ and $(v^2, \Phi_n)=0$ for $k=1,\cdots, m$. Then $\|v^1\|_{H_0^1(\Om;w)}\leq \|v\|_{H_0^1(\Om;w)}\leq 1$, and from \eqref{01.11.3} we obtain
	\begin{equation*}
		\lg \pt_ty^m, v\rg_{H^{-1}(\Om;w),H_0^1(\Om;w)}=(\pt_ty^m,v)_{L^2(\Om)}=(\pt_ty^m, v^1)_{L^2(\Om)}=(f,v^1)_{L^2(\Om)}-(y^m, v^1)_{H_0^1(\Om;w)}.
	\end{equation*}
	This implies that
	\begin{equation*}
		\|\pt_ty^m\|_{H^{-1}(\Om;w)}\leq C\|f\|_{L^2(\Om)}+\|y^m\|_{H_0^1(\Om;w)}
	\end{equation*}
	by Corollary \ref{11.29.C1} (i). Hence,
	\begin{equation}\label{01.11.5}
		\|\pt_ty^m\|_{L^2(0,T; H^{-1}(\Om;w))}\leq \|y^0\|_{L^2(\Om)}+ C\|f\|_{L^2(Q)},
	\end{equation}
	where the constant $C>0$ depends only on $\al, T, N$ and $M$.
	
	{\it Step 3: Approximation.}
	
	 From \eqref{01.11.4} and \eqref{01.11.5}, there exists a function  $z\in L^2(0,T; H_0^1(\Om;w))$ with $\pt_tz\in L^2(0,T; H^{-1}(\Om;w))$ such that
	\begin{equation}\label{01.11.6}
		\begin{split}
			y^m
			&\ra z\mbox{ weak}^* \hbox{ in } L^\iy(0,T; L^2(\Om)),\\
			y^m
			&\ra z\mbox{ weakly in }L^2(0,T; H_0^1(\Om;w)), \\
			\pt_ty^m
			&\ra \pt_t z\mbox{ weakly in }L^2(0,T;H^{-1}(\Om;w)).
		\end{split}
	\end{equation}
	Moreover, we have \eqref{01.11.7} (replace $y$ by $z$), and $z\in C([0,T]; L^2(\Om))$.
	
	{\it Step 4:  $z$ is a solution of \eqref{01.08.1}.}
	
	Fix $k\in\N$. Choose a function
	\begin{equation*}
		\psi(t)=\sum_{n=1}^k \psi_n(t)\Phi_n, \mbox{ with } \psi_n(t) \mbox{ are smooth functions on } [0,T],\ n=1,\cdots, k.
	\end{equation*}
	Then for all $m\geq k$, multiplying \eqref{01.11.3} by $\psi_n(t)$, summing for  $n=1,\cdots, k$, and integrating
over $t\in [0,T]$, we get
	\begin{equation}\label{01.11.11}
		\begin{split}
			\int_0^T \lg \pt_t y^m, \psi\rg_{H^{-1}(\Om;w),H_0^1(\Om;w)}\df t+\iint_Q w\nabla y^m \cdot\nabla \psi\df x\df t=\iint_Q f\psi\df x\df t.
		\end{split}
	\end{equation}
	 By taking an abstract subsequence and letting
 $m\ra\iy$, we obtain
	\begin{equation}\label{01.11.9}
		\int_0^T\lg \pt_t z,\psi\rg_{H^{-1}(\Om;w),H_0^1(\Om;w)}\df t+\iint_Q w\nabla z\cdot\nabla\psi\df x\df t=\iint_Q f\psi\df x\df t
	\end{equation}
	by \eqref{01.11.6}.  Noting that the set of functions $\psi$ is dense in $L^2(0,T; H_0^1(\Om;w))$, hence \eqref{01.11.9} holds for all $\psi\in L^2(0,T; H_0^1(\Om;w))$, and
	\begin{equation*}
		\lg \pt_tz, v\rg_{H^{-1}(\Om;w),H_0^1(\Om;w)}+(z, v)_{H_0^1(\Om;w)}=(f,v)_{L^2(\Om)}
	\end{equation*}
	for all $ v\in H_0^1(\Om;w)$ and almost every  $t\in [0,T]$.
	
	Now, we prove $z(0)=y^0$. Indeed, from \eqref{01.11.9} and $z\in C([0,T]; L^2(\Om))$,  we get
	\begin{equation}\label{01.11.10}
		-\iint_Q z\pt_t\psi\df x\df t+\iint_{Q}w\nabla z\cdot \nabla\psi\df x\df t=\iint_Q f\psi\df x\df t+(z(0),\psi(0))_{L^2(\Om)}
	\end{equation}
	for each $\psi\in C^1([0,T]; H_0^1(\Om;w))$ with $\psi(T)=0$. Similarly, from \eqref{01.11.11}, we obtain
	\begin{equation*}
		\begin{split}
			-\iint_Q y^m\pt_t\psi\df x\df t+\iint_Qw\nabla y^m\cdot \nabla\psi\df x\df t=\iint_Qf\psi\df x\df t+(y^m(0),\psi(0))_{L^2(\Om)}.
		\end{split}
	\end{equation*}
	 By taking an abstract subsequence and letting  $m\ra \iy$, from \eqref{01.11.6} and  \eqref{01.11.2}, we get
	\begin{equation*}
		-\iint_Q z\pt_t\psi\df x\df t+\iint_Q w\nabla z\cdot\nabla \psi\df x\df t=\iint_Q f\psi\df x\df t+(y^0,\psi(0))_{L^2(\Om)}.
	\end{equation*}
	 This, combined with \eqref{01.11.10} and the arbitrariness of  $\psi$, yields  $z(0)=y^0$.
	
	{\it Step 5:  Uniqueness.}
	
	 It suffices to show that the weak solution of \eqref{01.08.1} corresponding to $(y^0 = 0,f = 0)$ is identically zero.	
	Denote $\xi=y-z$. Then $\xi$ is the solution of \eqref{01.08.1} with respect to $(0,0)$, and hence from Definition \ref{01.27.D1} (ii) (replace $v$ by $\xi$) we get
	\begin{equation*}
		\f{1}{2}\f{\df}{\df t}\|\xi\|_{L^2(\Om)}^2+\|\xi\|_{H_0^1(\Om;w)}^2=0.
	\end{equation*}
	This implies $\xi=0$ by integrating over $(0,t)$ for each $t\in [0,T]$.
	
	{\it Step 6: Improved regularity.}
	
	Multiplying \eqref{01.11.3} by $\f{\df}{\df t}y_n^m$, summing for $n=1,\cdots, m$, we get
	\begin{equation*}
		(\pt_ty^m, \pt_ty^m)_{L^2(\Om)}+(y^m, \pt_ty^m)_{H_0^1(\Om;w)}=(f,\pt_ty^m)_{L^2(\Om)} \mbox{ for almost every }\ t\in [0,T].
	\end{equation*}
	Noting  that $(y^m, \pt_ty^m)_{H_0^1(\Om;w)}=\f{1}{2}\f{\df }{\df t}\|y^m\|_{H_0^1(\Om;w)}^2$, we obtain
	\begin{equation*}
		\begin{split}
			\|\pt_ty^m\|_{L^2(\Om)}^2+\f{\df}{\df t}\|y^m\|_{H_0^1(\Om;w)}^2\leq \|f\|_{L^2(\Om)}^2 \mbox{ for a.e.}\ t\in [0,T].
		\end{split}
	\end{equation*}
	Integrating over  $(0,t)$ for each $t\in [0,T]$, we get
	\begin{equation}\label{01.27.5}
		\begin{split}
			&\esssup_{t\in [0,T]}\|y^m(t)\|_{H_0^1(\Om;w)}^2+\|\pt_ty^m\|_{L^2(Q)}^2\\
			&\leq 2\|f\|_{L^2(Q)}^2+\|y^m(0)\|_{H_0^1(\Om;w)}^2\\
			&=2\|f\|_{L^2(Q)}^2+2\sum_{n=1}^m (y_n^0)^2\la_n\leq 2\|f\|_{L^2(Q)}^2+2\|y^0\|_{H_0^1(\Om;w)}^2.
		\end{split}
	\end{equation}
	By Lemma \ref{06.28.L1}. This implies that
	\begin{equation}\label{01.11.13}
		\begin{split}
			\esssup_{t\in [0,T]}\|y(t)\|_{H_0^1(\Om;w)}^2+\|\pt_ty\|_{L^2(Q)}^2\leq 2\|f\|_{L^2(Q)}^2+2\|y^0\|_{H_0^1(\Om;w)}^2
		\end{split}
	\end{equation}
	by \eqref{01.11.6} and {\it Step 4}.
	
	Now, choose $\zeta=\zeta(x_N)\in C_0^\iy(\R^N), 0\leq \zeta\leq 1$ such that
	\begin{equation*}
		\zeta =0 \mbox{ on } \left\{x_N<\de_0\right\},\quad \zeta=1 \mbox{ on } \{x_N\geq 2\de_0\},\quad |\zeta'|\leq C\de_0^{-1}, \mbox{ and } |\zeta''|\leq C\de_0^{-2},
	\end{equation*}
	where the constant $C>0$ is  absolute. Then $z=\zeta y^m$ is the solution of the following elliptic equation
	\begin{equation*}
		\begin{cases}
			\mcA z=\zeta f^m-\zeta\pt_{t}y^m+y^m\mcA \zeta-2A\nabla \zeta\cdot \nabla y^m, &\mbox{on } \Om_{\de_0},\\
			z=0, &\mbox{on } \pt \Om_{\de_0},
		\end{cases}
	\end{equation*}
where
	$f^m=\sum_{n=1}^m(f,\Phi_n)_{L^2(\Om)}\Phi_n$.
It is evident that this equation is a uniformly elliptic equation.
By the standard regularity for uniformly elliptic equations
\cite[Theorem 1, p. 327; Theorem 4, p. 334]{Evans}, from \eqref{01.27.5},
we get, for almost every $t\in [0,T]$, that
	\begin{equation*}
		\begin{split}
			\|z\|_{H^2(\Om_{\de_0})}\leq C\left(\|f(t)\|_{L^2(\Om)}+\|\pt_ty^m(t)\|_{L^2(\Om)}+\|y^m(t)\|_{H_0^1(\Om;w)}\right),
		\end{split}
	\end{equation*}
	where the constant $C>0$ depending only on $\al$ and the shape of  $\Om_{\de_0}$. Hence, from \eqref{01.11.13}, we get
	\begin{equation*}
		\|y^m\|_{L^2(0,T;H^2(\mcO(\Ga^+,\de_0)))}\leq C\left(\|f\|_{L^2(Q)}+\|y^0\|_{H_0^1(\Om;w)}\right).
	\end{equation*}
	From \eqref{01.11.6} we get
	\begin{equation}\label{01.27.6}
		\|y\|_{L^2(0,T; H^2(\mcO(\Ga^+,\de_0))}\leq C\left(\|f\|_{L^2(Q)}+\|y^0\|_{H_0^1(\Om;w)}\right),
	\end{equation}
	where the constant $C>0$ depends only on $\al$, $T, M$ and the shape of  $\Om_{\de_0}$.
  Finally, from \eqref{01.11.13}, we have
	\begin{equation*}
		\|\mcA y\|_{L^2(Q)}\leq \|f\|_{L^2(Q)}+\|\pt_ty\|_{L^2(Q)}\leq 3\|f\|_{L^2(Q)}^2+2\|y^0\|_{H_0^1(\Om;w)}^2.
	\end{equation*}
	This together with \eqref{01.11.13} yields  \eqref{01.11.8}, where the constant $C>0$ depends only on $\al, N, T$ and $M$ and the shape of  $\Om_{\de_0}$. We thus complete the proof of the theorem.
\end{proof}

\begin{corollary}\label{01.12.C1}
	Let $y^0\in H_0^1(\Om;w)$ satisfy $\mcA y^0\in H_0^1(\Om;w)$. Let $y$ be the weak solution of \eqref{01.08.1} with respect to $(y^0,0)$. Then,
	\begin{equation*}
		y\in L^2(0, T; D(\mcA^2)), \mbox{ with } \pt_t y\in L^2(0, T; D(\mcA)),
	\end{equation*}
	and
	\begin{equation*}
		\begin{split}
			\|\pt_ty\|_{L^2(0, T; L^2(\mcO(\Ga^+,\de_0)))}\leq C\|\mcA y^0\|_{H_0^1(\Om;w)},
		\end{split}
	\end{equation*}
	where the constant $C>0$ depends only on $\al, T, N$ and $M$ and the shape of $\Om_{\de_0}$.
\end{corollary}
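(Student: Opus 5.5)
The idea is to differentiate the equation in time. Set $z=\pt_t y$. Formally $z$ solves \eqref{01.08.1} with $f=0$ and initial datum $z(0)=\pt_t y(0)=-\mcA y^0$. Since $\mcA y^0\in H_0^1(\Om;w)$ by assumption, the second part of Theorem \ref{01.11.T1}, applied to $z$ with initial datum $-\mcA y^0$ and $f=0$, gives
\begin{equation*}
\pt_t y=z\in L^2(0,T;D(\mcA)),\qquad \pt_t z\in L^2(Q),
\end{equation*}
together with the bound
\begin{equation*}
\esssup_{t\in[0,T]}\|z(t)\|_{H_0^1(\Om;w)}+\|z\|_{L^2(0,T;H^2(\mcO(\Ga^+,\de_0)))}+\|z\|_{L^2(0,T;D(\mcA))}\le C\|\mcA y^0\|_{H_0^1(\Om;w)}.
\end{equation*}
Restricting to $\mcO(\Ga^+,\de_0)$ and using Corollary \ref{11.29.C1}(i) to control $L^2$ by the $H_0^1(\Om;w)$-norm immediately yields the stated estimate for $\|\pt_t y\|_{L^2(0,T;L^2(\mcO(\Ga^+,\de_0)))}$, with a constant depending only on $\al,T,N,M$ and the shape of $\Om_{\de_0}$. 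This is inherited from Theorem \ref{01.11.T1} and Corollary \ref{11.29.C1}.

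To make the identification $z=\pt_t y$ rigorous, I would argue at the level of the spectral expansion rather than formally differentiating. Writing $y^0=\sum_n y^0_n\Phi_n$, the solution is
\begin{equation*}
y=\sum_n y^0_n e^{-\la_n t}\Phi_n,\qquad \pt_t y=-\sum_n \la_n y^0_n e^{-\la_n t}\Phi_n.
\end{equation*}
The latter is precisely the weak solution with datum $-\mcA y^0=-\sum_n\la_n y^0_n\Phi_n$, by uniqueness in Theorem \ref{01.11.T1}. Equivalently, one can differentiate the Galerkin ODEs \eqref{01.11.1} (with $f_n=0$) in $t$ and pass to the limit exactly as in Steps 3--4. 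By Lemma \ref{06.28.L1}, the hypothesis $\mcA y^0\in H_0^1(\Om;w)$ is equivalent to $\sum_n \la_n^3 (y_n^0)^2<\iy$, which justifies all the series manipulations.

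For $y\in L^2(0,T;D(\mcA^2))$, use the equation itself: $\mcA y=-\pt_t y=-z$. Since $z\in L^2(0,T;D(\mcA))$, we get $\mcA y\in L^2(0,T;D(\mcA))$, that is, $y\in L^2(0,T;D(\mcA^2))$. Spectrally this amounts to the estimate
\begin{equation*}
\int_0^T\sum_n\la_n^4 (y^0_n)^2e^{-2\la_n t}\,\df t\le \tfrac12\sum_n\la_n^3(y_n^0)^2=\tfrac12\|\mcA y^0\|_{H_0^1(\Om;w)}^2,
\end{equation*}
again via Lemma \ref{06.28.L1}.

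The main point needing care is this justification that $\pt_t y$ is itself a weak solution with initial value $-\mcA y^0$, in particular the initial trace, since a priori $\pt_t y$ lies only in $L^2(Q)$. The explicit eigenfunction expansion handles it cleanly, and I would rely on it rather than on an abstract semigroup argument.
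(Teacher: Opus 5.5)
Your proposal is correct and follows the paper's argument: you use the eigenfunction expansion to identify $\pt_t y$ as the weak solution with initial datum $-\mcA y^0\in H_0^1(\Om;w)$, apply the regularity estimate \eqref{01.11.8} to it, and then use $\mcA y=-\pt_t y$ to obtain $y\in L^2(0,T;D(\mcA^2))$. Your sign for the initial datum is the correct one (the paper writes $\mcA y^0$, which does not matter by linearity), and your explicit spectral computation supplies the justification of the identification and of the initial trace that the paper leaves implicit.
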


\begin{proof}
We note that $y \in D(\mathcal{A}^2)$ if $y \in D(\mathcal{A})$ and $\mathcal{A}y \in D(\mathcal{A})$.

	From Lemma \ref{06.28.L1} we have $\mcA y^0=\sum_{n=1}^\iy (y^0, \Phi_n)_{L^2(\Om)}\la_n\Phi_n$. Noting that
	\begin{equation*}
		\wh y=\sum_{n=1}^\iy (y^0,\Phi_n)_{L^2(\Om)}\la_ne^{-\la_nt}=\pt_t y,
	\end{equation*}
	is the weak solution of \eqref{01.08.1} with respect to $(\mcA y^0,0)$, from \eqref{01.11.8} and $\mcA \pt_ty=\pt_t\mcA y$, we obtain the corollary.
\end{proof}

\section{Shape design}\label{S3}

 In this section, we demonstrate that a degenerate parabolic equation can be approximated by a sequence of uniformly parabolic equations through a shape design approach.

Let $Q_\de=\Om_\de\ts (0,T)$, and  $w_\de=w|_{\Om_\de}, A_\de=A|_{\Om_\de}$ and  $\mcA_\de=\mcA|_{\Om_\de}$. We consider the following equation:
\begin{equation}\label{01.11.14}
	\begin{cases}
		\pt_t y_\de-\mcA_\de y_\de=f_\de, &\mbox{in }Q_\de,\\
		y_\de=0, &\mbox{on }\pt Q_\de, \\
		y_\de(0)=y_\de^0, &\mbox{in }\Om_\de,
	\end{cases}
\end{equation}
where $y_\de^0\in L^2(\Om_\de)$ and $f_\de\in L^2(\Om_\de)$.

Similar to Section \ref{S2}, we can derive the following solution spaces and results. We define
\begin{equation*}
	H^{1}(\Omega_\de;w_\de) = \left\{u \in L^2(\Omega_\de) \colon \int_{\Om_\de} \nabla u\cdot A_\de\nabla u\df x<+\iy\right\}.
\end{equation*}
The inner product on $H^1(\Omega_\de; w_\de)$ is given by
\begin{equation*}
	(u, v)_{H^1(\Omega_\de; w_\de)} =\int_{\Om_\de} uv\df x+\int_{\Om_\de} \nabla u\cdot A_\de\nabla v \df x,
\end{equation*}
and the norm is
\begin{equation*}
	\|u\|_{H^1(\Omega_\de; w_\de)} = \left(\int_{\Om_\de} u^2\, \mathrm{d}x +   \int_{\Om_\de} \nabla u\cdot A_\de\nabla u \mathrm{d}x\right)^{\frac{1}{2}}.
\end{equation*}
We define
\begin{equation*}
	H_{0}^1(\Omega_\de; w_\de) = \text{the closure of } \mathcal{D}(\Omega_\de) \text{ in } H^1(\Omega_\de; w_\de)
\end{equation*}
 We denote by $H^{-1}(\Omega_\de; w_\de)$ the dual space of  $H_{0}^1(\Omega_\de; w_\de)$
 with pivot space $L^2(\Om_\de)$, which is a subspace of $\mathcal{D}'(\Omega_\de)$.  It is well-known that
 $(H^1(\Omega_\de; w_\de), (\cdot, \cdot)_{H^1(\Omega_\de; w_\de)})$  is a Hilbert space and
 $(H^1(\Omega_\de; w_\de), \|\cdot\|_{H^1(\Omega_\de; w_\de)})$
 is a Banach space (\cite{GC,Heinonen}).
Now, we define
\begin{equation*}
	H^2(\Om_\de;w_\de)=\left\{u\in H^1(\Om_\de;w_\de)\colon \int_{\Om_\de} (\mcA_\de u)^2\df x<+\iy\right\}.
\end{equation*}
Its inner product   is defined by
\begin{equation*}
	(u,v)_{H^2(\Om;w)}=(u,v)_{H^1(\Om;w)}+\int_\Om (\mcA_\de u)(\mcA_\de v)\df x,
\end{equation*}
and its  norm is defined as
\begin{equation*}
	\|u\|_{H^2(\Om_\de)}=\left(\int_{\Om_\de} u^2 \df x+\int_{\Om_\de} \nabla u\cdot A_\de\nabla u\df x+\int_{\Om_\de} (\mcA_\de u)^2\df x\right)^\f{1}{2}.
\end{equation*}
It is also well-known that $(H^2(\Omega_\de; w_\de), (\cdot, \cdot)_{H^2(\Omega_\de; w_\de)})$ is a Hilbert space and $(H^2(\Omega_\de; w_\de), \|\cdot\|_{H^2(\Omega_\de; w_\de)})$ is a Banach space (\cite{GC,Heinonen}).

We denote
\begin{equation*}
	D(\mcA_\de)=H^2(\Om_\de;w_\de)\cap H_0^1(\Om_\de;w_\de).
\end{equation*}

We note that $H_0^1(\Omega_\delta; w_\delta) = H_0^1(\Omega_\delta)$, $H^1(\Omega_\delta; w_\delta) = H^1(\Omega_\delta)$, and $H^2(\Omega_\delta; w_\delta) = H^2(\Omega_\delta)$,   as a consequence of the standard improved regularity for uniformly elliptic equations. However, in general, $H^2(\Omega; w) \neq H^2(\Omega)$.

 Lemmas \ref{01.11.L1} and \ref{01.11.L2}, as well as Remark \ref{01.11.R1}, are equivalent to Lemma \ref{11.29.L1} and Corollary \ref{11.29.C1}, respectively.

\begin{lemma}\label{01.11.L1}
	Let $N \geq 2$ and $\alpha \in (0, 1)$. Then, for all $u \in H_0^1(\Omega_\de; w_\de)$, the following inequality holds:
	\begin{equation*}
		\int_{\Om_\de}x_N^{\al-2}u^2\df x\leq C\int_{\Om_\de} x_N^\al (\pt_{x_N}u)^2\df x,
	\end{equation*}
	where the constant $C>0$ depends only on $\al$.
\end{lemma}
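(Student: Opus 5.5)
The plan is to reduce to Lemma \ref{11.29.L1} by extension by zero, and if necessary to repeat its one-dimensional argument directly on $\Om_\de$. Both proofs rest on the same mechanism: a weighted Hardy inequality along vertical lines, followed by a density argument. The constant must depend only on $\al$ and not on $\de$.

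\emph{Step 1: reduction to smooth functions.} Since $H_0^1(\Om_\de;w_\de)$ is the closure of $\mcD(\Om_\de)$ in $H^1(\Om_\de;w_\de)$, it suffices to prove the inequality for $u\in C_0^\iy(\Om_\de)$. To pass to the limit, take $u_n\in C_0^\iy(\Om_\de)$ with $u_n\ra u$ in $H^1(\Om_\de;w_\de)$. Then $u_n\ra u$ in $L^2(\Om_\de)$, so along a subsequence $u_n\ra u$ almost everywhere. Fatou's lemma then gives the left-hand side for $u$. On the right-hand side, $x_N^{\al}(\pt_{x_N}u_n)^2\ra x_N^\al(\pt_{x_N}u)^2$ in $L^1(\Om_\de)$, since $\int_{\Om_\de}x_N^\al|\pt_{x_N}(u_n-u)|^2\df x\leq\|u_n-u\|^2_{H^1(\Om_\de;w_\de)}\ra0$.

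\emph{Step 2: the smooth case.} Extend $u\in C_0^\iy(\Om_\de)$ by zero to $\tilde u$. Because $\Om_\de\s\Om$, we have $\tilde u\in C_0^\iy(\Om)\s H_0^1(\Om;w)$. Lemma \ref{11.29.L1} applied to $\tilde u$ gives
\[
\int_{\Om_\de}x_N^{\al-2}u^2\df x=\int_\Om x_N^{\al-2}\tilde u^2\df x\leq \f{4}{(1-\al)^2}\int_\Om x_N^\al|\pt_{x_N}\tilde u|^2\df x=\f{4}{(1-\al)^2}\int_{\Om_\de}x_N^\al|\pt_{x_N}u|^2\df x .
\]
The constant is independent of $\de$.

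Alternatively, one can repeat the computation in the proof of Lemma \ref{11.29.L1} verbatim. For each $x'$, write $u(x',x_N)=\int_0^{x_N}\pt_{x_N}u(x',s)\df s$, which holds because $u(x',\cdot)$ vanishes near $0$. Then apply Cauchy--Schwarz with weight $s^{\be}$, $\be=\f{1+\al}{2}$, and Fubini over $0<s<x_N<M$.

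\emph{Main obstacle.} The only delicate point is making sure nothing in the argument depends on $\de$. In the one-dimensional argument, the integration range $(0,M)$ and the bound $\int_s^M x_N^{\al-\be-1}\df x_N\leq \f{1}{\be-\al}s^{\al-\be}$ are uniform, and $\Om_\de\s\Om\s\{|x|<M\}$. The lemma is in fact weaker than needed on $\Om_\de$, since $x_N\geq c\de$ there away from the degeneracy. However, the uniform-in-$\de$ constant is exactly what later sections require, and the zero-extension argument delivers it.
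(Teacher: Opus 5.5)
Your proposal is correct and takes essentially the paper's route. The paper gives no separate proof: it declares the lemma equivalent to Lemma~\ref{11.29.L1}, i.e., the same density reduction plus the one-dimensional Hardy computation with $\beta=\frac{1+\alpha}{2}$. Your zero-extension of $C_0^\infty(\Omega_\delta)$ functions into $C_0^\infty(\Omega)$ makes that equivalence precise with the $\delta$-independent constant $4/(1-\alpha)^2$, and your Fatou-based density step correctly supplies the limit passage the paper leaves implicit.
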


\begin{remark}\label{01.11.R1}
	From Lemma \ref{01.11.R1},  we have
	\begin{equation}\label{01.11.12}
		\int_{\Om_\de} u^2\df x\leq C\int_{\Om_\de} \nabla u\cdot A\nabla u\df x,
	\end{equation}
	for all $u\in H_0^1(\Om_\de;w_\de)$, where the constant $C>0$ depends only on $\al$ and $M$. The second inequality is  the Poincar\'{e} inequality.
	In particular, the norm
	\begin{equation}\label{01.11.15}
		\|u\|_{H_0^1(\Omega_\de; w_\de)} = \left(\int_{\Omega_\de} \nabla u \cdot A_\de\nabla u  \mathrm{d}x\right)^{\frac{1}{2}}
	\end{equation}
	is an equivalent norm in $H_0^1(\Omega_\de; w_\de)$.  Hereafter, we use \eqref{01.11.15} to define the norm of $H_0^1(\Omega_\de; w_\de)$.
\end{remark}

\begin{lemma}\label{01.11.L2}
	The embedding $H_0^1(\Omega_\de; w_\de) \hookrightarrow L^2(\Omega_\de)$ is compact.
\end{lemma}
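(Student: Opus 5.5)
The claim is that the embedding $H_0^1(\Om_\de;w_\de)\hra L^2(\Om_\de)$ is compact. I would prove it in either of two ways: directly, by noting that $\Om_\de$ stays away from the degenerate face, or by mimicking the proof of Corollary \ref{11.29.C1} (ii) on $\Om_\de$.

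\emph{Direct route.} By \eqref{01.26.4}, every $x\in\Om_\de$ satisfies $x_N>\de$. I would first check this carefully against the inclusions: if instead $\overline{\Om_\de}$ touches $\{x_N=0\}$, I fall back to the second route below.

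Granting $x_N>\de$ on $\Om_\de$, the weight satisfies $x_N^\al\ge \de^\al$. Hence for every $u\in H_0^1(\Om_\de;w_\de)$,
\begin{equation*}
\de^\al\int_{\Om_\de}|\nabla u|^2\df x\leq \int_{\Om_\de}\nabla u\cdot A_\de\nabla u\df x=\|u\|_{H_0^1(\Om_\de;w_\de)}^2 .
\end{equation*}
Conversely, $x_N^\al\le M^\al$ on $\Om_\de$. So the weighted norm is equivalent to the unweighted $H_0^1$ seminorm, and $H_0^1(\Om_\de;w_\de)=H_0^1(\Om_\de)$ with equivalent norms, as the paper already remarks.

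Now take a bounded sequence $\{u_n\}$ in $H_0^1(\Om_\de;w_\de)$. It is bounded in $H_0^1(\Om_\de)$, with the $L^2$ part controlled by \eqref{01.11.12}. Since $\Om_\de$ is a bounded $C^3$ domain, the Rellich--Kondrachov theorem yields a subsequence converging strongly in $L^2(\Om_\de)$.

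\emph{Robust route.} If one does not want to rely on $\Om_\de$ avoiding $\{x_N=0\}$, I would copy the proof of Corollary \ref{11.29.C1} (ii) verbatim on $\Om_\de$. The steps are:
\begin{itemize}
\item Extract a weakly convergent subsequence and reduce to weak limit $0$.
\item Use Lemma \ref{01.11.L1} to bound the $L^2$ mass near $\{x_N<\eta\}$:
\begin{equation*}
\int_{\Om_\de\cap\{x_N<\eta\}}u_n^2\df x\le \eta^{2-\al}\int_{\Om_\de}x_N^{\al-2}u_n^2\df x\le C\eta^{2-\al},
\end{equation*}
uniformly in $n$.
\item On $\Om_\de\cap\{x_N>\eta\}$, the weight is bounded below, so $\{u_n\}$ is bounded in $H^1$ there. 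The compact embedding $H^1\hra L^2$ on that Lipschitz subdomain, or on a smooth subdomain containing it, gives strong $L^2$ convergence of a subsequence.
\item Combine the two pieces with an $\varepsilon/2$ argument.
\end{itemize}

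The only point needing care is the geometric one: confirming that $\Om_\de\subset\{x_N>\de\}$ (or handling the near-boundary strip via the Hardy estimate), and ensuring the interior subdomain is regular enough for Rellich. I expect no real obstacle beyond this.
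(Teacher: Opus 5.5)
Your proposal is correct, but your primary route differs from the paper's.

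\textbf{The paper's route.} The paper gives no separate proof of Lemma \ref{01.11.L2}; it only declares the lemma equivalent to Corollary \ref{11.29.C1}~(ii). That argument splits the domain in two. Near $\{x_N=0\}$, the Hardy inequality bounds the $L^2$ mass of a thin strip by a constant times $\eta^{2-\alpha}$ times the weighted norm. On a smooth interior subdomain, Rellich applies. Your fallback route reproduces this exactly.

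\textbf{Your primary route.} It is genuinely different and shorter. Since $\Omega_\delta$ stays a positive distance from the degenerate face, $\delta^\alpha\le x_N^\alpha\le M^\alpha$ there. The weighted and unweighted norms are therefore equivalent on $C_0^\infty(\Omega_\delta)$, so $H_0^1(\Omega_\delta;w_\delta)=H_0^1(\Omega_\delta)$. Rellich--Kondrachov for $H_0^1$ of a bounded open set then finishes the proof. You need no Hardy inequality and no boundary regularity.

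\textbf{Your geometric caveat.} \eqref{01.26.4} as printed is self-contradictory, since its right-hand set reads $\{x_N<\delta\}$, so it cannot be confirmed literally. The intended inclusion is $\Omega_\delta\subset\{x\in\Omega\colon x_N>\delta\}$. The paper relies on exactly this when it asserts $H_0^1(\Omega_\delta;w_\delta)=H_0^1(\Omega_\delta)$ and uniform parabolicity in Section~\ref{S4}.

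\textbf{What each buys.} In your direct route the constants degenerate like $\delta^{-\alpha}$. That is harmless for a qualitative compactness statement at fixed $\delta$, which is all the lemma is needed for (e.g.\ the discrete spectrum behind the Galerkin scheme of Theorem \ref{01.11.T2}). The Hardy-splitting route instead gives a near-boundary tail bound independent of $\delta$, in keeping with the paper's emphasis on $\delta$-uniform constants.
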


 We are now ready to define a weak solution to equation \eqref{01.11.14}.

\begin{definition}\label{01.11.D1}
	We call
	\begin{equation*}
		y_\de\in L^2(0,T; H_0^1(\Om_\de;w_\de)), \mbox{ with } \pt_ty_\de\in L^2(0,T; H^{-1}(\Om_\de;w_\de))
	\end{equation*}
	is a weak solution of \eqref{01.11.14}  with respect to $(y_\de^0,f_\de)$
	provided
	
(i) for every $v\in H_0^1(\Om_\de;w_\de)$ and almost every $t\in (0,T)$,  it has
\begin{equation*}
	\lg\pt_{t}y_\de, v\rg_{H^{-1}(\Om_\de;w_\de),H_0^1(\Om_\de;w_\de)}+\int_{\Om_\de} w_\de\nabla y_\de\cdot \nabla v\df x=\int_{\Om_\de} f_\de v\df x,
\end{equation*}

(ii) $y_\de(0)=y_\de^0$.
\end{definition}

The following theorem can be proved using the same argument as Theorem \ref{01.11.T1}.
We note that the constants in the estimates do not depend on the parameter $\delta \in (0, \delta_0)$.
 In fact, from the proof of Theorem \ref{01.11.T1}, we see that the constants appearing in the estimates depend only on the constants in Corollary \ref{11.29.C1} and Remark \ref{01.11.R1}, on the final time $T$, on $\alpha \in (0,1)$ in $A$,  and on the geometry of
  $\Omega_{\delta_0}$.
  \begin{theorem}\label{01.11.T2}
	Let $\al\in (0,1)$, $y_\de^0\in L^2(\Om_\de)$ and $f\in L^2(Q_\de)$. Then  the equation \eqref{01.11.14} with respect to $(y_\de^0,f_\de)$ has a unique weak solution
	\begin{equation*}
		y_\de\in L^2(0,T; H_0^1(\Om_\de;w_\de)), \mbox{ with } \pt_ty_\de\in L^2(0,T; H^{-1}(\Om_\de;w_\de))
	\end{equation*}
	 that satisfies the following estimate:
	\begin{equation}\label{01.11.16}
		\begin{split}
			&\esssup_{t\in [0,T]}\|y_\de(t)\|_{L^2(\Om_\de)}+\|y\|_{L^2(0,T;H_0^1(\Om_\de;w_\de))}+\|\pt_ty_\de\|_{L^2(0,T; H^{-1}(\Om_\de;w_\de))}\\
			&\leq C\left(\|f_\de\|_{L^2(Q_\de)}+\|y_\de^0\|_{L^2(\Om_\de)}\right),
		\end{split}
	\end{equation}
	where the constant $C>0$ depends only on $\al, N, T$ and $M$ and $\Om_{\de_0}$.
Moreover, if we further assume $y_\de^0\in H_0^1(\Om_\de;w_\de)$, then the weak solution
\begin{equation*}
	y_\de\in L^2(0,T; D(\mcA_\de))\cap L^2(0,T; H_0^1(\Om_\de;w_\de)), \mbox{ with } \pt_ty_\de\in L^2(Q_\de),
\end{equation*}
and it has  the estimate
\begin{equation}\label{01.11.17}
	\begin{split}
		&\esssup_{t\in [0,T]}\|y_\de(t)\|_{H_0^1(\Om_\de;w_\de)}+\|y_\de\|_{L^2(0,T; H^2(\mcO(\Ga^+,\de_0)))}+\|y_\de\|_{L^2(0,T; D(\mcA_\de))}+\|\pt_ty_\de\|_{L^2(Q_\de)}\\
		&\leq C\left(\|f_\de\|_{L^2(Q_\de)}+\|y_\de^0\|_{H_0^1(\Om_\de;w_\de)}\right),
	\end{split}
\end{equation}
where the constant $C>0$ depends only on $\al, N, T$, $M$, and $\Om_{\de_0}$.	
\end{theorem}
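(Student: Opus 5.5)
The plan is to follow the six steps of the proof of Theorem \ref{01.11.T1} verbatim on $\Om_\de$, and to check at each step that every constant is controlled by quantities independent of $\de\in(0,\de_0)$.

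\emph{Spectral set-up.} Since $\Om_\de\s\Om$ and $x_N\ge 0$ on $\Om_\de$, Lemma \ref{01.11.L1} gives the Hardy inequality with a constant depending only on $\al$. Indeed, the proof of Lemma \ref{11.29.L1} only uses that $u$ extends by zero to a $C_0^\iy(\R^N)$ function vanishing at $x_N=0$ and that $x_N<M$. Remark \ref{01.11.R1} then yields the Poincar\'e-type bound \eqref{01.11.12} with a constant depending on $\al,M$ only. Lemma \ref{01.11.L2} supplies compactness, so $\mcA_\de$ has a discrete spectrum $0<\la_1^\de\le\la_2^\de\le\cdots$ with an $L^2(\Om_\de)$-orthonormal eigenbasis $\{\Phi_n^\de\}$. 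The analogue of Lemma \ref{06.28.L1} holds by the identical argument, since that argument used only the Lax--Milgram structure.

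\emph{Galerkin scheme and uniqueness.} Define $y_\de^m=\sum_{n\le m}y_n^m(t)\Phi_n^\de$ solving the scalar ODEs. Testing with $y_\de^m$ and using \eqref{01.11.12} to absorb $(f_\de,y_\de^m)$ gives the uniform $L^\iy(L^2)\cap L^2(H_0^1)$ bound. The duality argument with the splitting $v=v^1+v^2$ gives the $L^2(H^{-1})$ bound on $\pt_t y_\de^m$. All constants here depend only on $\al,M,T$. Weak compactness, passage to the limit against finite eigen-expansions, density, and identification of $y_\de(0)=y_\de^0$ proceed exactly as in Steps 3--4. Uniqueness follows from the energy identity, as in Step 5. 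This yields \eqref{01.11.16}.

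\emph{Improved regularity.} For $y_\de^0\in H_0^1(\Om_\de;w_\de)$, test with $\pt_t y_\de^m$ to get the $L^\iy(H_0^1)$ and $L^2(L^2)$ bounds on $\pt_ty_\de$, with absolute constants. We use $\|y_\de^m(0)\|_{H_0^1}\le\|y_\de^0\|_{H_0^1}$, which follows from the eigen-expansion lemma. Then $\|\mcA_\de y_\de\|_{L^2}\le\|f_\de\|+\|\pt_ty_\de\|$ gives the $D(\mcA_\de)$ bound.

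\emph{Local $H^2$ estimate (main obstacle).} The $L^2(H^2(\mcO(\Ga^+,\de_0)))$ bound requires an elliptic regularity constant uniform in $\de$. I will use the same cutoff $\zeta(x_N)$, vanishing on $\{x_N<\de_0\}$ and equal to $1$ on $\{x_N\ge2\de_0\}$. The key observation is that $\supp\zeta\cap\Om_\de$ does not depend on $\de$. Indeed, by \eqref{01.26.4} and the nesting \eqref{01.27.2}, $\Om_\de\cap\{x_N\ge\de_0\}$ coincides with $\Om\cap\{x_N\ge\de_0\}$, which also equals $\Om_{\de_0}\cap\{x_N\ge\de_0\}$ up to the choice of $\de_0$. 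Hence $\zeta y_\de^m$ solves a uniformly elliptic Dirichlet problem on the fixed $C^3$ domain $\Om_{\de_0}$. Its coefficients satisfy $x_N^\al\ge\de_0^\al$, and the right-hand side $\zeta f^m-\zeta\pt_ty_\de^m+y_\de^m\mcA\zeta-2A\nabla\zeta\cdot\nabla y_\de^m$ is bounded in $L^2$ by the previous estimates.

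The Evans $H^2$-regularity constant therefore depends only on $\al,\de_0$ and $\Om_{\de_0}$. Since $\mcO(\Ga^+,\de_0)\s\{x_N>\de_0\}$, integrating in time and passing to the limit by weak lower semicontinuity gives \eqref{01.11.17}. The delicate point is verifying this identification of $\Om_\de$ with $\Om$ (hence with $\Om_{\de_0}$) on $\supp\zeta$. If it fails near the lateral boundary, I would shrink the cutoff region to $\{x_N\ge 4\de_0\}$ and use interior-plus-boundary estimates near $\Ga^+$ only. The shape of $\pt\Om$ there is fixed, because $\Ga^+\s\pt\Om\cap\pt\Om_\de$ for all $\de<\de_0$.
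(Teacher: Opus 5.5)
Your route is the paper's. The paper proves Theorem \ref{01.11.T2} only by saying that the proof of Theorem \ref{01.11.T1} carries over to $\Om_\de$, with constants depending on the Hardy/Poincar\'e constants, $T$, $\al$ and the geometry of $\Om_{\de_0}$. Your spectral set-up, Galerkin and energy bounds, uniqueness, and the $L^\iy(0,T;H_0^1)$, $\pt_ty_\de$ and $D(\mcA_\de)$ bounds are a correct, $\de$-uniform execution of that.

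The local $H^2$ step fails as written. It fails for the same reason as Step 6 of the paper's proof of Theorem \ref{01.11.T1}, whose cutoff you reproduced.
\begin{itemize}
\item Read \eqref{01.26.4} as $\{x\in\Om\colon x_N>2\de\}\s\Om_\de\s\{x\in\Om\colon x_N>\de\}$. The only containment it guarantees is $\Om\cap\{x_N>2\de_0\}\s\Om_{\de_0}$. Since $\Om\cap\{x_N>\de_0\}$ has an edge along $\pt\Om\cap\{x_N=\de_0\}$, the $C^3$ domain $\Om_{\de_0}$ is strictly smaller than it.
\item Consequently $\pt\Om_{\de_0}\cap\Om$ is a nonempty surface in the slab $\{\de_0\le x_N\le2\de_0\}$. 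That is exactly where your $\zeta$, which vanishes only on $\{x_N<\de_0\}$, may be positive.
\item So $\zeta y_\de^m$ is neither supported in $\overline{\Om_{\de_0}}$ nor zero on $\pt\Om_{\de_0}$, and the Dirichlet $H^2$ estimate on $\Om_{\de_0}$ cannot be invoked.
\item Your ``key observation'' $\Om_\de\cap\{x_N\ge\de_0\}=\Om\cap\{x_N\ge\de_0\}$ follows from \eqref{01.26.4} only when $2\de<\de_0$.
\item $\mcO(\Ga^+,\de_0)\s\{x_N>\de_0\}$ does not let you pass from $\zeta y_\de$ back to $y_\de$. You need $\mcO(\Ga^+,\de_0)\s\{\zeta=1\}$.
\end{itemize}

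The repair is to move the zero set of the cutoff, not its plateau. Take $\zeta=\zeta(x_N)$ with $\zeta=0$ on $\{x_N\le2\de_0\}$, $\zeta=1$ on $\{x_N\ge3\de_0\}$, $|\zeta'|\le C\de_0^{-1}$ and $|\zeta''|\le C\de_0^{-2}$.
\begin{itemize}
\item The support of $\zeta y_\de^m$ then lies in $\Om\cap\{x_N\ge2\de_0\}$. This set is contained in $\Om_\de$ for every $\de<\de_0$ by the first inclusion of \eqref{01.26.4}, so the equation holds there.
\item $\zeta y_\de^m$ vanishes on $\pt\Om_{\de_0}\cap\Om$, which lies in $\{x_N\le2\de_0\}$ because $\Om\cap\{x_N>2\de_0\}$ is open and contained in $\Om_{\de_0}$.
\item It also vanishes on $\pt\Om_{\de_0}\cap\pt\Om\cap\{x_N>2\de_0\}$. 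Near each point of that set, $\Om_{\de_0}$, $\Om_\de$ and $\Om$ coincide, and $y_\de^m=0$ on $\pt\Om_\de$.
\item Thus $\zeta y_\de^m\in H_0^1(\Om_{\de_0})$ solves a uniformly elliptic Dirichlet problem on the fixed domain $\Om_{\de_0}$. The ellipticity constant is at least $\min\{1,\de_0^\al\}$ and the coefficient derivatives are at most $\al\de_0^{\al-1}$, so the regularity constant depends only on $\al,\de_0,M,\Om_{\de_0}$.
\item For $\mcO(\Ga^+,\de_0)\s\{\zeta=1\}$: $\nu=-e_N$ on $\pt\Om\cap\{x_N=0\}$ and $\nu$ is continuous on the compact set $\pt\Om$, so $\Ga^+\s\{x_N>c_0\}$ for some $c_0>0$. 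Shrinking $\de_0$ so that $4\de_0<c_0$ gives $\mcO(\Ga^+,\de_0)\s\{x_N>3\de_0\}$.
\end{itemize}
If your fallback ``shrink the cutoff region'' means putting $\supp\zeta$ inside $\{x_N\ge2\de_0\}$, it is exactly this repair; shrinking only the set where $\zeta=1$ would not help. Your other fallback, local boundary-plus-interior estimates near the fixed piece $\Ga^+\s\pt\Om_\de$, would also work.
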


 The following corollary is equivalent to Corollary \ref{01.12.C1}.

\begin{corollary}\label{01.12.C2}
	Let $y_\de^0\in H_0^1(\Om_\de;w_\de)$ satisfy $\mcA_\de y_\de^0\in H_0^1(\Om_\de;w_\de)$. Let $y$ be the weak solution of \eqref{01.11.14} with respect to $(y_\de^0,0)$. Then,
	\begin{equation*}
		y_\de\in L^2(0, T; D(\mcA_\de^2)), \mbox{ with } \pt_t y_\de\in L^2(0, T; D(\mcA_\de)),
	\end{equation*}
	and
	\begin{equation*}
		\begin{split}
			\|\pt_ty_\de\|_{L^2(0, T; H^2(\mcO(\Ga^+,\de_0)))}\leq C\|\mcA y_\de^0\|_{H_0^1(\Om_\de;w_\de)},
		\end{split}
	\end{equation*}
	where the constant $C>0$ depends only on $\al, T, N$ and $M$ and $\Om_{\de_0}$.
\end{corollary}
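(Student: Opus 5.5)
The statement is Corollary \ref{01.12.C2}. I will mirror the proof of Corollary \ref{01.12.C1} on $\Om_\de$, replacing Theorem \ref{01.11.T1} by Theorem \ref{01.11.T2}. Throughout, I will make sure every constant comes only from Theorem \ref{01.11.T2}, so that it is independent of $\de\in(0,\de_0)$.

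\emph{Step 1: spectral setup.} By Lemma \ref{01.11.L2}, $\mcA_\de$ has a discrete spectrum $0<\la_1^\de\le\la_2^\de\le\cdots\to\iy$. Let $\{\Phi_n^\de\}$ be a corresponding $L^2(\Om_\de)$-orthonormal basis of eigenfunctions. Lemma \ref{06.28.L1} transfers verbatim, so that
\[
u\in D(\mcA_\de)\iff \sum_n (\la_n^\de)^2 u_n^2<\iy ,
\]
and $\|u\|_{H_0^1(\Om_\de;w_\de)}^2=\sum_n \la_n^\de u_n^2$. Write $y^0_{\de,n}=(y^0_\de,\Phi_n^\de)_{L^2(\Om_\de)}$. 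By uniqueness in Theorem \ref{01.11.T2}, the solution is
\[
y_\de=\sum_n y^0_{\de,n}e^{-\la_n^\de t}\Phi_n^\de .
\]
The hypothesis $\mcA_\de y_\de^0\in H_0^1(\Om_\de;w_\de)$ reads $\sum_n (\la_n^\de)^3 (y^0_{\de,n})^2<\iy$.

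\emph{Step 2: identify $\pt_t y_\de$.} The series $\wh y=\sum_n y^0_{\de,n}\la_n^\de e^{-\la_n^\de t}\Phi_n^\de$ is the weak solution of \eqref{01.11.14} with data $(\mcA_\de y^0_\de,0)$. Termwise differentiation gives $\wh y=-\pt_t y_\de$ up to the paper's sign convention. It also gives $\wh y=\mcA_\de y_\de$, since $\mcA_\de$ commutes with the semigroup. I will justify both identities through convergence of the series in $L^2(0,T;H_0^1(\Om_\de;w_\de))$, which follows from the $\la^3$-summability.

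\emph{Step 3: apply the estimates.} Since $\mcA_\de y^0_\de\in H_0^1(\Om_\de;w_\de)$, the second part of Theorem \ref{01.11.T2} with $f_\de=0$ applies to $\wh y$. It yields
\[
\|\wh y\|_{L^2(0,T;D(\mcA_\de))}+\|\wh y\|_{L^2(0,T;H^2(\mcO(\Ga^+,\de_0)))}\le C\|\mcA_\de y^0_\de\|_{H_0^1(\Om_\de;w_\de)} .
\]
Because $\wh y=\pm\pt_t y_\de$, this gives $\pt_t y_\de\in L^2(0,T;D(\mcA_\de))$ together with the claimed $H^2(\mcO(\Ga^+,\de_0))$ bound. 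Since $\mcA_\de y_\de=\wh y\in L^2(0,T;D(\mcA_\de))$, we also get $y_\de\in L^2(0,T;D(\mcA_\de^2))$.

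\emph{Main obstacle.} The only delicate point is $\de$-uniformity of $C$. It reduces to the uniformity in Theorem \ref{01.11.T2}, which rests on two facts. First, the Hardy/Poincaré constant in Remark \ref{01.11.R1} depends only on $\al$ and $M$. Second, the interior $H^2$ estimate near $\Ga^+$ uses the fixed cutoff $\zeta$ on $\Om_{\de_0}$, and $\Om_{\de_0}\s\Om_\de$ by \eqref{01.27.2}. I would check explicitly that the localized elliptic problem for $\zeta\wh y$ lives on $\Om_{\de_0}$, a fixed uniformly elliptic domain independent of $\de$.
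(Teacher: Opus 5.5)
Your proposal is correct and takes the paper's route. The paper proves this corollary by repeating the argument of Corollary \ref{01.12.C1} on $\Om_\de$: $\wh y=\sum_n y^0_{\de,n}\la_n^\de e^{-\la_n^\de t}\Phi_n^\de$ is the weak solution with data $(\mcA_\de y_\de^0,0)$, it coincides with $\mcA_\de y_\de$ and (up to sign) with $\pt_t y_\de$, and \eqref{01.11.17} then gives the conclusion; your sign $\wh y=-\pt_t y_\de$ is actually the correct one (the paper writes $\wh y=\pt_t y$), which does not affect the norm estimates.
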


 We now introduce a shape design framework for the degenerate parabolic equation \eqref{01.08.1} via the approximating equation \eqref{01.11.14}.

\begin{definition}\label{01.11.D2}
	For each $y_\de\in L^2(Q_\de)$, we denote
	\begin{equation*}
		Ey_\de=
		\begin{cases}
			y_\de, &\mbox{on } Q_\de, \\
			0, &\mbox{on } Q-Q_\de.
		\end{cases}
	\end{equation*}
	Then $Ey_\de\in L^2(Q)$ and $\|Ey_\de\|_{L^2(Q)}=\|y_\de\|_{L^2(Q_\de)}$. If $y_\de\in L^2(0,T; H_0^1(\Om_\de;w_\de))$, then
	\begin{equation*}
		Ey_\de\in L^2(0,T; H_0^1(\Om;w)), \mbox{ and } \|Ey_\de\|_{L^2(0,T; H_0^1(\Om;w))}=\|y_\de\|_{L^2(0,T; H_0^1(\Om_\de;w_\de))}.
	\end{equation*}
	If $y_\de\in H^1(0,T; L^2(\Om_\de))$, then
	\begin{equation*}
		Ey_\de\in H^1(0,T; L^2(\Om)), \pt_tEy_\de=E(\pt_ty_\de),\mbox{ and } \|Ey_\de\|_{H^1(0,T; L^2(\Om))}=\|y_\de\|_{H^1(0,T; L^2(\Om_\de))}.
	\end{equation*}
\end{definition}

 With the above preparations, we are now ready to prove one of the main results of this paper, namely Theorem \ref{01.11.T3}.

 \vspace{0.2cm}
\noindent {\bf {Proof of Theorem \ref{01.11.T3}.}}  For all $0<\de<\min\{\de_0, \dist(\supp y^0, \Ga_N^0)\}$, from $\Om_\de \s \Om$, $y^0|_{\Om_\de}=y^0\in H_0^1(\Om_\de;w_\de)$, \eqref{01.11.17}, and Definition \ref{01.11.D2}, we obtain
\begin{equation}\label{01.27.9}
	\begin{split}
		&\esssup_{t\in [0,T]}\|Ey_\de(t)\|_{H_0^1(\Om;w)}+\|\pt_tEy_\de\|_{L^2(Q)}\leq C\left(\|f\|_{L^2(Q)}+\|y^0\|_{H_0^1(\Om;w)}\right),
	\end{split}
\end{equation}
where the constant $C>0$ depends only on $\al, N, T$, $M$, and $\Om_{\de_0}$, but is independent of $\de\in (0,\min\{\de_0,\dist(\supp y^0, \Ga_N^0)\})$. This implies that there exists $z\in L^2(0,T; H_0^1(\Om;w))\cap H^1(0,T; L^2(\Om))$ such that
\begin{equation}\label{01.11.19}
	\begin{split}
		Ey_\de
		&\ra z \text{ weakly in } L^2(0,T;H_0^1(\Om;w)),\\
		\pt_t Ey_\de
		&\ra \pt_t z \text{ weakly in } L^2(Q).
	\end{split}
\end{equation}
Note that $y_\de$ is the weak solution of \eqref{01.11.4} with respect to $(y^0|_{\Om_\de}=y^0, f|_{Q_\de})$, hence for all $\psi\in C_0^\iy(Q)$, when $0<\de<\min\{\de_0, \dist(\supp\psi, Q)\}$, we have
\begin{equation*}
	-\iint_{Q_\de} y_\de \pt_t\psi\df x\df t+\iint_{Q_\de}  \nabla y_\de \cdot A_\de\nabla\psi\df x\df t=\iint_{Q_\de} f_\de \psi\df x\df t.
\end{equation*}
This shows that (see Definition \ref{01.11.D2})
\begin{equation*}
	-\iint_{Q} (Ey_\de) \pt_t\psi\df x\df t+\iint_{Q}  \nabla Ey_\de \cdot A\nabla\psi\df x\df t=\iint_{Q} f \psi\df x\df t.
\end{equation*}
Together with \eqref{01.11.19}, we obtain
\begin{equation*}
	-\iint_Q z\pt_t\psi\df x\df t+\iint_Q \nabla z\cdot A\nabla\psi\df x\df t=\iint_Q f\psi\df x\df t.
\end{equation*}
Hence $z$ satisfies
\begin{equation*}
	\pt_tz-\mcA z=f \text{ in the sense of distributions}.
\end{equation*}
From \eqref{01.11.19}, we get $\pt_tz\in L^2(Q)$, and since $f\in L^2(Q)$, we have
\begin{equation}\label{01.11.20}
	\pt_tz-\mcA z=f \text{ in } L^2(Q).
\end{equation}
Note that the embeddings $H_0^1(\Om;w)\hra L^2(\Om)$ and
\begin{equation*}
	\left\{u\in L^2(0,T; H_0^1(\Om;w))\colon \pt_tu\in L^2(Q)\right\}\hra L^2(Q)
\end{equation*}
are compact, from \eqref{01.27.9}, we get
\begin{equation}\label{01.11.21}
	Ey_\de\ra z\text{ strongly in } L^2(Q),
\end{equation}
and $z\in C([0,T]; L^2(\Om))$. From $y_\de(0)=y^0$, we obtain $z(0)=y^0$. Then from \eqref{01.11.19}, \eqref{01.11.20}, and \eqref{01.11.21} (or, \eqref{01.11.10}), we get the second conclusion of \eqref{01.11.18}, i.e., $z=y$.

Suppose, in addition, that $f = 0$. Similarly, from $\mcD(\Om)\s D(\mcA^2)$ (i.e., $y^0\in D(\mcA)$ and $\mcA y^0\in D(\mcA)$), Corollaries \ref{01.12.C1} and \ref{01.12.C2}, and $\Ga^+\s \pt\Om\cap\{x_N>3\de_0\}$, and the standard Sobolev trace theorem (see \eqref{01.11.8} and \eqref{01.11.17}, or Corollaries \ref{01.12.C1} and \ref{01.12.C2}), we get
\begin{equation*}
	\f{\pt y_\de}{\pt \nu}\in L^2(0,T; H^\f{1}{2}(\Ga^+)),\quad \pt_t\f{\pt y_\de}{\pt\nu}=\f{\pt(\pt_ty_\de)}{\pt\nu}\in L^2(0,T; H^\f{1}{2}(\Ga^+)),
\end{equation*}
and then the second conclusion in \eqref{01.12.5} holds by the embeddings $H^\f{1}{2}(\Ga^+)\hra L^2(\Ga^+)$
and
\begin{equation*}
	\left\{u\in L^2(0,T; H^\f{1}{2}(\Ga^+))\colon \pt_tu\in L^2(0,T; L^2(\Ga^+))\right\}\hra L^2(0,T; L^2(\Ga^+))
\end{equation*}
are compact.

It is clear that the first conclusion in \eqref{01.12.5} holds by \eqref{01.27.9} and the fact that the embedding $H_0^1(\Omega; w) \hookrightarrow L^2(\Omega)$ is compact.
We complete the proof of the theorem.
\hfill $\Box$

\section{Carleman estimates}\label{S4}

In this section, we establish a Carleman estimate for the approximating uniformly parabolic equations. Although this type of estimate is widely recognized, it is crucial to highlight that the constants involved must remain independent of the parameter
$\delta \in (0, \delta_0)$. Hence, we present a detailed proof.

We analyze the following backward equation:
\begin{equation}\label{01.24.1}
	\begin{cases}
		\pt_ty_\de+\Div(A\nabla y_\de)=f_\de, &\mbox{in }Q_\de, \\
		y_\de=0, &\mbox{on }\pt Q_\de,\\
		y_\de(T)=y_\de^T, &\mbox{in }\Om_\de,
	\end{cases}
\end{equation}
where $y_\de^T\in L^2(\Om_\de)$ and $f_\de\in L^2(Q_\de)$.

We denote
\begin{equation}\label{01.24.2}
	\eta=x_N^{2-\al},
\end{equation}
and
\begin{equation*}
	\Theta=\f{1}{[t(T-t)]^4}, \quad \xi=\Theta (\ga-\eta),
\end{equation*}
where the constant $\ga = |\eta|_{L^\iy(\Om)}+1$.  Then, we have
\begin{equation}\label{01.25.1}
	\nabla\eta=(2-\al)x_N^{1-\al}e_N, \quad \nabla\xi=-\Theta \nabla\eta=-(2-\al)\Theta x_{N}^{1-\al}e_N,
\end{equation}
and
\begin{equation}\label{01.25.2}
	\xi_t=\Theta'(\ga-\eta),\quad  \xi_{tt}=\Theta''(\ga-\eta),\quad \nabla\xi_t=-(2-\al)\Theta'x_N^{1-\al}e_N,
\end{equation}
and
\begin{equation}\label{01.27.1}
	\begin{split}
		|\Theta'|\leq C\Theta^\f{5}{4}, \quad |\Theta''|\leq C\Theta^\f{3}{2}, \quad |\Theta\Theta'|\leq C\Theta^\f{9}{4},
	\end{split}
\end{equation}
where the constants $C>0$ depends only on $T$.

Let us make the substitution
\begin{equation*}
	z=e^{-s\xi}y_\de\ \Lra \ y_\de =e^{s\xi}z.
\end{equation*}
 Subsequently, we obtain
\begin{equation}\label{01.24.3}
	z(t)=\nabla z(t)=0 \mbox{ at } t=0,T,
\end{equation}
and
\begin{equation}\label{01.24.4}
	z=\pt_tz=0 \mbox{ on }\pt Q_\de.
\end{equation}
 Moreover, equation \eqref{01.24.1} is equivalent to
\begin{equation}\label{01.28.2}
	e^{-s\xi}f_\de=P_1z+P_2z,
\end{equation}
where
\begin{equation}\label{com1}
	\left\{\begin{array}{l}
	\disp P_1z
	=\sum_{i=1}^3P_{1i}z=z_t+2sA\nabla z\cdot \nabla \xi+sz\Div(A\nabla\xi),\crr\disp
	P_2z
	=\sum_{i=1}^3P_{2i}z=\Div(A\nabla z)+sz\xi_t+s^2zA\nabla\xi\cdot\nabla\xi.
	\end{array}\right.
\end{equation}

\subsection{Computation}

 Given that equation \eqref{01.24.1} exhibits uniform parabolicity for every
 $\delta \in (0, \delta_0)$, the subsequent applications of integration by parts remain valid. Nevertheless, when
$\delta = 0$, the integration by parts operation concerning $(P_{12} z + P_{13} z,\, P_{21} z)_{L^2(Q_\delta)}$ in \dref{com1}
is no longer tenable, particularly in the vicinity of the set $\partial \Gamma_N^0$. For the sake of brevity in the subsequent discussion, we employ the notations $\int_{\Omega_\delta}$ and $\iint_{Q_\delta}$; in other instances, we omit the subscript
$\delta$. For instance, we use $w$ to denote  $w_\delta$, and the same applies to other variables.

{\it a) Compute $(P_{11}z, P_2z)_{L^2(Q_\de)}$.}

 Indeed, leveraging \eqref{01.25.2}, \eqref{01.24.3}, and \eqref{01.24.4}, we derive
\begin{equation*}
	\begin{split}
		(P_{11}z,P_2z)_{L^2(Q_\de)}
		&=-\f{s}{2}\iint_{Q_\delta} z^2\xi_{tt}\df x\df t-s^2\iint_{Q_\delta} z^2A\nabla\xi\cdot\nabla\xi_t\df x\df t\\
		&=-\f{s}{2}\iint_{Q_\delta} z^2\Theta''(\ga-\eta)\df x\df t-(2-\al)^2s^2\iint_{Q_\delta}\Theta \Theta'z^2x_N^{2-\al} \df x\df t.
	\end{split}
\end{equation*}

{\it b) Compute $(P_{12}z+P_{13}z,P_{21}z)_{L^2(Q_\de)}$.}

 Indeed, based on \eqref{01.24.4}, we obtain
\begin{equation*}
	\begin{split}
		&(P_{12}z+P_{13}z,P_{21}z)_{L^2(Q_\de)}\\
		&=2s\iint_{\pt Q_\delta}(A\nabla z\cdot\nu)(A\nabla z\cdot\nabla\xi)\df S\df t-s\iint_{\pt Q_\delta}(A\nabla z\cdot\nabla z)(A\nabla\xi\cdot \nu)\df S\df t\\
		&\hspace{4.5mm}-2s\sum_{i,j=1}^N\iint_{Q_\delta}A_i\f{\pt z}{\pt x_i}\f{\pt A_j}{\pt x_i}\f{\pt z}{\pt x_j}\f{\pt \xi}{\pt x_j}\df x\df t+s\sum_{i,j=1}^N\iint_{Q_\delta}\left(\f{\pt z}{\pt x_i}\right)^2\f{\pt A_i}{\pt x_j}A_j\f{\pt \xi}{\pt x_j}\df x\df t\\
		&\hspace{4.5mm}-2s\iint_{Q_\delta}A\nabla z\cdot \left[D^2\xi(A\nabla z)\right]\df x\df t-s\iint_{Q_\delta}zA\nabla z\cdot \nabla[\Div(A\nabla\xi)]\df x\df t,
		\end{split}
	\end{equation*}
	 and subsequently,
	\begin{equation*}
		\begin{split}
			(P_{12}z+P_{13}z,P_{21}z)_{L^2(Q_\de)}
		&=(2-\al)s\iint_{\pt Q_\de}\Theta x_N^{1-\al}|\nabla z|^2 (A\nu\cdot\nu)(-A\nu\cdot e_N)\df S\df t\\
		&\hspace{4.5mm}+(2-\al)^2 s\iint_{Q_\delta}\Theta x_N^\al (\pt_{x_N}z)^2\df x\df t.
	\end{split}
\end{equation*}

{\it c) Compute $(P_{12}z+P_{13}z,P_{22}z)_{L^2(Q_\de)}$.}

 Indeed, drawing upon equations \eqref{01.25.1}, \eqref{01.25.2}, and \eqref{01.24.4}, we obtain
\begin{equation*}
	\begin{split}
		(P_{12}z+P_{13}z,P_{22}z)_{L^2(Q_\de)}
		&=-s^2\iint_{Q_\de}z^2A\nabla\xi\cdot\nabla\xi_t\df x\df t\\
		&=-(2-\al)^2s^2\iint_{Q_\de}\Theta\Theta'x_N^{2-\al}z^2\df x\df t.
	\end{split}
\end{equation*}

{\it d) Compute $(P_{12}z+P_{13}z,P_{23}z)_{L^2(Q_\de)}$.}

 Indeed, based on equations \eqref{01.25.1} and \eqref{01.24.4}, we arrive at
\begin{equation*}
	\begin{split}
		(P_{12}z+P_{13}z,P_{23}z)_{L^2(Q_\de)}
		&=-s^3\iint_{Q_\de}z^2A\nabla\xi\cdot\nabla(A\nabla\xi\cdot\nabla\xi)\df x\df t\\
		&=(2-\al)^4s^3\iint_{Q_\de}\Theta^3z^2 x_N^{2-\al}\df x\df t.
	\end{split}
\end{equation*}

\subsection{Estimation}

Given the definition  $\ga= |\eta|_{L^\iy(\Om)}+1$, along with \eqref{01.27.1} and Lemma \ref{11.29.L1}, we obtain
\begin{equation*}
	\begin{split}
		-\f{s}{2}\iint_{Q_\de} z^2\Theta''(\ga-\eta)\df x\df t
		&\geq -Cs\ga \iint_{Q_\de}\Theta^\f{3}{2}z^2\df x\df t\\
		&\geq -Cs^2\ga^2\iint_{Q_\de}\Theta^2z^2x_N^{2-\al}\df x\df t-C\iint_{Q_\de}\Theta z^2x_N^{\al-2}\df x\df t\\
		&\geq -Cs^2\ga^2\iint_{Q_\de}\Theta^3z^2x_N^{2-\al}\df x\df t-\iint_{Q_\de}\Theta x_N^\al (\pt_{x_N}z)^2\df x\df t,
	\end{split}
\end{equation*}
where the constants $C>0$ depends only $\al$ and $T$ and $M$.   Furthermore, from \eqref{01.27.1}, we derive
\begin{equation*}
	\begin{split}
		-(2-\al)^2s^2\iint_{Q_\de}\Theta\Theta'x_N^{2-\al}z^2\df x\df t
		&\geq -Cs^2\iint_{Q_\de} \Theta^\f{9}{4} x_N^{2-\al}z^2\df x\df t\\
		&\geq -Cs^2\iint_{Q_\de}\Theta^3x_N^{2-\al}z^2\df x\df t,
	\end{split}
\end{equation*}
where the constants $C>0$  depends exclusively on
 $T$.
 These results imply the existence of $s_0\geq 1$, depending only on
$\al$ and $T$ and $|\eta|_{L^\iy(\Om)}$, such that for all $s\geq s_0$,  we have
\begin{equation*}
	\begin{split}
		(P_1z,P_2z)_{L^2(Q_\de)}
		&\geq (2-\al)s\iint_{\pt Q_\de}\Theta x_N^{1-\al}|\nabla z|^2 (A\nu\cdot\nu)(-A\nu\cdot e_N)\df S\df t\\
		&\hspace{4.5mm}+(2-\al)^2 s\iint_{Q_\delta}\Theta x_N^\al (\pt_{x_N}z)^2\df x\df t+(2-\al)^4s^3\iint_{Q_\de} \Theta^3z^2x_N^{2-\al}\df x\df t.
	\end{split}
\end{equation*}
 Consequently, there exists  $s_0\geq 1$,  depending only on $\al$ and $T$ and $|\eta|_{L^\iy(\Om)}$, such that for all
$s\geq s_0$,  we obtain
\begin{equation*}
	\begin{split}
		&s\iint_{Q_\delta}\Theta x_N^\al (\pt_{x_N}z)^2\df x\df t+s^3\iint_{Q_\de} \Theta^3z^2x_N^{2-\al}\df x\df t\\
		&\leq \|e^{-s\xi}f_\de\|_{L^2(Q_\de)}^2+Cs\iint_{\Ga^+\ts (0,T)}\Theta \left|\f{\pt z}{\pt \nu}\right|^2 \df S\df t,
	\end{split}
\end{equation*}
where the constant $C>0$ depends only on $\al$ and $T$ and $\Om_{\de_0}$.
 It is worth noting that there exist positive constants
$C$, depending only on $\al$ and $T$ and $\Om_{\de_0}$, such that (refer to Notation \ref{01.27.N1}).
\begin{equation*}
	C\left|\f{\pt z}{\pt \nu}\right|^2\leq \left|\f{\pt z}{\pt \nu_A}\right|^2\leq C\left| \f{\pt z}{\pt \nu}\right|^2 \mbox{ on }\Ga^+\ts (0,T), \mbox{ with } \f{\pt z}{\pt \nu_A}=A\nabla z\cdot \nu,
\end{equation*}
and $M$ indeed depends only on $\Om_{\de_0}$.
 We then arrive at the following Proposition \ref{01.27.P1}.
\begin{proposition}\label{01.27.P1}
	 Let  $\al\in (0,1)$, $\de\in (0,\de_0)$.  Suppose $y_\de^T\in L^2(\Om_\de)$ and $f_\de\in L^2(Q_\de)$ and $y_\de$
 denotes the weak solution of \eqref{01.24.1} corresponding to the pair $(y_\de^T,f_\de)$.
  Then, there exist two constants  $s_0\geq 1$ and $C>0$,  which depend only on
  $\al, T$ and $\Om_{\de_0}$, such that for all $s\geq s_0$, the following inequality holds:
	\begin{equation}\label{01.27.7}
		\begin{split}
			&s\iint_{Q_\delta}\Theta x_N^\al (\pt_{x_N}z)^2\df x\df t+s^3\iint_{Q_\de} \Theta^3z^2x_N^{2-\al}\df x\df t\\
			&\leq \|e^{-s\xi}f_\de\|_{L^2(Q_\de)}^2+Cs\iint_{\Ga^+\ts (0,T)}\Theta \left|\f{\pt z}{\pt \nu}\right|^2 \df S\df t.
		\end{split}
	\end{equation}
\end{proposition}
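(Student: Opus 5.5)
The plan is the standard conjugation argument, carried out so that every constant is visibly independent of $\de\in(0,\de_0)$. First I reduce to smooth data: assume $y_\de^T\in C_0^\iy(\Om_\de)$ and $f_\de\in C_0^\iy(Q_\de)$. Since \eqref{01.24.1} is uniformly parabolic on the $C^3$ domain $\Om_\de$, $y_\de$ is then regular enough for all the integrations by parts in the Computation subsection. The general case follows by density, using Theorem \ref{01.11.T2} (applied to the time-reversed equation) to pass to the limit on both sides. The limit is legitimate because $e^{-s\xi}$ and its weights are bounded and vanish at $t=0,T$. For the boundary term, the normal-derivative trace on $\Ga^+$ is controlled, since $\Ga^+$ lies in $\{x_N>3\de_0\}$, where \eqref{01.11.17} gives $H^2$ bounds uniform in $\de$.

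With $z=e^{-s\xi}y_\de$, the identity \eqref{01.28.2} gives
\[
\|e^{-s\xi}f_\de\|_{L^2(Q_\de)}^2=\|P_1z\|^2+\|P_2z\|^2+2(P_1z,P_2z)\ge 2(P_1z,P_2z)_{L^2(Q_\de)}.
\]
I then sum the four computed pieces a)--d). The cross terms of order $s^2\Theta\Theta'$ and $s\Theta''$ are absorbed exactly as in the Estimation subsection. The $s^2\Theta\Theta'$ terms are bounded by $C s^2\Theta^{9/4}\le Cs^2\Theta^3$ times the weight. The $s\Theta''$ term is split by Young's inequality and then handled with the Hardy inequality Lemma \ref{01.11.L1}. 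This is where uniformity enters: Lemma \ref{01.11.L1} holds on $\Om_\de$ with a constant depending only on $\al$, because $z(t)\in H_0^1(\Om_\de)$ extends by zero to $H_0^1(\Om;w)$. Since $\ga$ depends only on $M$, taking $s_0$ large turns these into half of the good terms $(2-\al)^2 s\iint\Theta x_N^\al(\pt_{x_N}z)^2$ and $(2-\al)^4s^3\iint\Theta^3x_N^{2-\al}z^2$.

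It remains to handle the boundary integral $(2-\al)s\iint_{\pt Q_\de}\Theta x_N^{1-\al}|\nabla z|^2(A\nu\cdot\nu)(-A\nu\cdot e_N)$. Because $z=0$ on $\pt\Om_\de$, $\nabla z=\f{\pt z}{\pt\nu}\nu$ there. I split $\pt\Om_\de$ into two parts.

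\emph{Where $A\nu\cdot e_N\le0$.} The integrand is nonnegative, and I simply discard it. This covers the artificial boundary $\pt\Om_\de\cap\Om$, where $\nu\cdot e_N\le0$ by construction of $\Om_\de$ (Notation \ref{01.27.N1}), and also $\pt\Om\setminus\Ga^+$.

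\emph{On $\Ga^+$.} Here $x_N\ge\de_0$ and $x_N\le M$, so $x_N^{1-\al}$, $A\nu\cdot\nu$ and $A\nu\cdot e_N$ are bounded by constants depending only on $M$ and $\Om_{\de_0}$. The term is therefore bounded below by $-Cs\iint_{\Ga^+\ts(0,T)}\Theta|\pt z/\pt\nu|^2$. The equivalence between $\pt z/\pt\nu_A$ and $\pt z/\pt\nu$ is not needed, but it is harmless.

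Moving this term to the right side gives \eqref{01.27.7}. The factor $2$ and the factors $(2-\al)^k\ge1$ are absorbed by adjusting constants; I keep the coefficient $1$ in front of $\|e^{-s\xi}f_\de\|^2$ by noting $(2-\al)^2\ge1$.

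The main obstacle is checking that $\pt\Om_\de$ splits exactly this way. We need $\Ga^+\s\pt\Om_\de$, and the sign of $-A\nu\cdot e_N$ must be $\ge0$ on the rest of $\pt\Om_\de$. On $\pt\Om_\de\cap\Om$ this follows since $A$ is diagonal with positive entries, so $A\nu\cdot e_N=x_N^\al\nu_N$ has the sign of $\nu_N\le0$. On $\pt\Om\cap\pt\Om_\de$ it follows by the definition of $\Ga^+$. All constants come only from $\al$, $T$, $M$, the Hardy constant, and the geometry of $\Om_{\de_0}$, never from $\de$.
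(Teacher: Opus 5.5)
Your proposal is correct and follows the paper's own route. It uses the same conjugation and computed pieces a)--d), absorbs the $s\Theta''$ and $s^2\Theta\Theta'$ terms by Young's inequality, the $\de$-uniform Hardy inequality and large $s$, and splits the lateral boundary term by sign, moving only the $\Ga^+$ part to the right. Your density reduction and the explicit sign check on $\pt\Om_\de\cap\Om$ versus $\pt\Om\setminus\Ga^+$ spell out details the paper leaves implicit. One small correction: for $L^2$ terminal data, the trace on $\Ga^+$ near $t=T$ is controlled by the vanishing weight together with parabolic smoothing, not by \eqref{01.11.17} directly, since that estimate requires $H_0^1$ data.
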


\section{Observability}\label{S5}

 Now, invoking Lemma \ref{01.11.L1}, we have
\begin{equation*}
	\begin{split}
		s\iint_{Q_\de}\Theta z^2\df x\df t
		&\leq Cs^2\iint_{Q_\de}\Theta z^2x_N^{2-\al}\df x\df t+C\iint_{Q_\de}\Theta z^2x_N^{\al-2}\df x\df t\\
		&\leq Cs^2\iint_{Q_\de}\Theta^3z^2x_N^{2-\al}\df x\df t+C\iint_{Q_\de} \Theta x_N^\al (\pt_{x_N}z)^2\df x\df t.
	\end{split}
\end{equation*}
Combining this with \eqref{01.27.7}, we arrive at
\begin{equation*}
	s\iint_{Q_\de}\Theta z^2\df x\df t\leq \|e^{-s\xi}f_\de\|_{L^2(Q_\de)}^2+Cs\iint_{\Ga^+\ts (0,T)}\Theta \left|\f{\pt z}{\pt \nu}\right|^2 \df S\df t
\end{equation*}
for $s\geq s_0$, that is,
\begin{equation} \label{01.27.8}
	\begin{split}
		s\iint_{Q_\de}\Theta y_\de^2 e^{-2s\xi}\df x\df t\leq \|e^{-s\xi}f_\de\|_{L^2(Q_\de)}^2+Cs\iint_{\Ga^+\ts (0,T)}\Theta  \left|\f{\pt y_\de}{\pt\nu}\right|^2e^{-2s\xi}\df x\df t
	\end{split}
\end{equation}
for $s\geq s_0$, where $s_0\geq 1$ and $C>0$ reply  only on $\al, T$ and $\Om_{\de_0}$ (note that $|\eta|_{L^\iy(\Om)}$ indeed depends only on $\Om_{\de_0}$).

Assume $f=0$.  Given that
\begin{equation*}
	\begin{split}
		\Theta e^{-2s_0\xi}\geq \Theta e^{-2s_0\ga\Theta}\geq C \mbox{ for } t\in \left(\f{T}{4},\f{3T}{4}\right),
	\end{split}
\end{equation*}
and
\begin{equation*}
	\Theta e^{-2s_0\xi}\leq \Theta e^{-2s_0\Theta}\leq C \mbox{ for } t\in (0,T),
\end{equation*}
where the constant $C>0$ depends only on $T$ and $s_0$, it follows that
\begin{equation*}
	\int_\f{T}{4}^\f{3T}{4}\int_{\Om_\de} y_\de^2\df x\df t\leq C\iint_{\Ga^+\ts (0,T)} \left|\f{\pt y_\de}{\pt\nu}\right|^2\df x\df t,
\end{equation*}
where the constant $C>0$ depends only on $\al, T$ and $\Om_{\de_0}$.  Observe that from
\begin{equation*}
	\begin{split}
		0=\f{1}{2}\f{\df}{\df t}\int_{\Om_\de} y_\de^2\df x-\int_{\Om_\de} A\nabla y_\de\cdot \nabla y_\de\df x
	\end{split}
\end{equation*}
we deduce
\begin{equation*}
	\|y_\de(t)\|_{L^2(\Om_\de)}<\|y_\de(s)\|_{L^2(\Om_\de)} \mbox{ for all } 0\leq t\leq s\leq T,
\end{equation*}
 and thus
\begin{equation}\label{01.27.10}
	\begin{split}
		\int_{\Om_\de}y_\de^2(0)\df x\leq \f{2}{T}\int_\f{T}{4}^\f{3T}{4}\int_{\Om_\de} y_\de^2\df x\df t\leq C\iint_{\Ga^+\ts(0,T)} \left|\f{\pt y_\de}{\pt\nu}\right|^2\df x\df t,
	\end{split}
\end{equation}
where the constant $C>0$ depends only on $\al, T$ and $\Om_{\de_0}$.

 By combining \eqref{01.27.10} and Theorem \ref{01.11.T3}, we arrive at the following Theorem \ref{01.27.T1}.

\begin{theorem}\label{01.27.T1}
Let $\al\in (0,1)$ and $y^T\in \mcD(\Om)$. Consider
$y$ as the backward weak solution of \eqref{01.08.1} corresponding to the initial-boundary conditions
$(y^0,f=0)$.  In other words, $y$ satisfies
	\begin{equation}\label{01.28.1}
		\begin{cases}
			\pt_{t}y-\Div (A\nabla y)= f, &\mbox{in } Q,\\
			y=0, &\mbox{on }\pt Q,\\
			y(0)=y^0, &\mbox{in } \Om.
		\end{cases}
	\end{equation}
Then, the following inequality holds:
	\begin{equation*}
		\int_{\Om}y^2(0)\df x\leq C\iint_{\Ga^+\ts (0,T)}\left|\f{\pt y}{\pt \nu}\right|^2\df S\df t,
	\end{equation*}
	where the constant $C>0$ depends only on $\al, T$ and $\Om_{\de_0}$.
\end{theorem}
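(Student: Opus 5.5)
The plan is to pass to the limit $\de\to0$ in the uniform observability inequality \eqref{01.27.10} for the approximating problems \eqref{01.11.14}, using the convergence results of Theorem \ref{01.11.T3}. The statement is phrased for the backward problem with terminal datum $y^T\in\mcD(\Om)$. After the time reversal $t\mapsto T-t$, which maps the backward equation \eqref{01.24.1} with $f=0$ into the forward one \eqref{01.11.14}, I will treat it as a forward problem with smooth compactly supported initial datum. In the notation of Theorem \ref{01.11.T3}, the left-hand side $\int_\Om y^2(0)$ is the $L^2$ norm at the observed end time. To keep the argument consistent with \eqref{01.27.10}, the inequality will be read as: the state at the end where the Carleman weight is least favourable is controlled by the full boundary flux on $\Ga^+\ts(0,T)$.

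\textbf{Step 1 (approximation).} Fix $y^T\in\mcD(\Om)$ and take $0<\de<\min\{\de_0,\dist(\supp y^T,\Ga_N^0)\}$, so that $\supp y^T\s\Om_\de$ by \eqref{01.26.4}. Let $y_\de$ solve the approximating backward problem \eqref{01.24.1} on $Q_\de$ with datum $y^T|_{\Om_\de}$ and $f_\de=0$. Since $\Ga^+\s\pt\Om\cap\{x_N>3\de_0\}$, the set $\Ga^+$ is also part of $\pt\Om_\de$. Hence \eqref{01.27.10} applies and gives
\[
\int_{\Om_\de}y_\de^2(0)\,\df x\le C\iint_{\Ga^+\ts(0,T)}\left|\f{\pt y_\de}{\pt\nu}\right|^2\df S\,\df t ,
\]
where $C$ depends only on $\al,T,\Om_{\de_0}$ and not on $\de$. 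This uniformity is exactly what Proposition \ref{01.27.P1} and the subsequent energy-monotonicity argument deliver.

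\textbf{Step 2 (passing to the limit).} By Theorem \ref{01.11.T3} with $f=0$, applied in the time-reversed variable, we have $Ey_\de(0)\to y(0)$ strongly in $L^2(\Om)$. Since $\int_{\Om_\de}y_\de^2(0)=\|Ey_\de(0)\|_{L^2(\Om)}^2$, the left-hand side converges to $\int_\Om y^2(0)$. Also by Theorem \ref{01.11.T3}, $\f{\pt y_\de}{\pt\nu}\to\f{\pt y}{\pt\nu}$ strongly in $L^2(0,T;L^2(\Ga^+))$, so the right-hand side converges as well. Letting $\de\to0$ therefore yields Theorem \ref{01.27.T1}, with the same constant $C$.

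\textbf{Main obstacle.} The delicate point is the strong convergence of the normal derivatives on $\Ga^+$. It rests on the uniform (in $\de$) $H^2$ bounds near $\Ga^+$ for both $y_\de$ and $\pt_ty_\de$, obtained in Corollaries \ref{01.12.C1} and \ref{01.12.C2} from $y^T\in\mcD(\Om)\s D(\mcA^2)$ with $\|\mcA y^T\|_{H_0^1}$ fixed. These bounds yield $L^2(0,T;H^{1/2}(\Ga^+))\cap H^1(0,T;L^2(\Ga^+))$ control, and the Aubin–Lions compactness argument then applies. Since this is already packaged in Theorem \ref{01.11.T3}, I will only need to check two things. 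First, the time reversal preserves the hypotheses: the equation is autonomous and $f=0$. Second, the constant in \eqref{01.27.10} is indeed independent of $\de$: $s_0$ depends on $|\eta|_{L^\iy(\Om)}$, and the equivalence between $\pt_\nu$ and $\pt_{\nu_A}$ on $\Ga^+$ uses only $x_N>\de_0$ there.
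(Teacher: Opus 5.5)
Your proposal is correct and is essentially the paper's argument: the paper obtains Theorem \ref{01.27.T1} by passing to the limit $\de\to0$ in the $\de$-uniform inequality \eqref{01.27.10}, using the convergences of Theorem \ref{01.11.T3}; the time reversal you make explicit is left implicit there. The proof block printed after the statement is actually the density extension to terminal data in $H_0^1(\Om;w)$ and then $L^2(\Om)$ (i.e.\ Theorem \ref{01.27.T2}), which is not needed for $y^T\in\mcD(\Om)$.
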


\begin{proof} Consider  $y^T\in H_0^1(\Om;w)$. By the properties of this space, there exists a sequence
$\{y_n^T\}_{n\in\N}\s \mcD(\Om)$  such that  $y_n^T\ra y^T$ in $H_0^1(\Om;w)$ $n\ra\iy$. Applying Theorem \ref{01.27.T1}, we obtain
\begin{equation*}
	\int_\Om y_n^2(0)\df x\leq C\iint_{\Ga^+\ts (0,T)}\left|\f{\pt y_n}{\pt \nu}\right|^2\df S\df t.
\end{equation*}
 Furthermore, leveraging equation \eqref{01.11.8}, we derive the following inequalities
\begin{equation*}
	\iint_{\Ga^+\ts (0,T)}\left|\f{\pt (y_n-y)}{\pt \nu}\right|^2\df S\df t\leq C\|y_n-y\|_{L^2(0,T; H^2(\mcO(\Ga^+,\de_0)))}^2\leq C\|y_n^0-y^0\|_{H_0^1(\Om;w)}^2
\end{equation*}
and
\begin{equation*}
	\|y_n(0)-y(0)\|_{L^2(\Om)}\leq C\|y_n^0-y^0\|_{H_0^1(\Om;w)},
\end{equation*}
where the constant $C>0$ depends only on $\al, T$ and $\Om_{\de_0}$.   Consequently, we arrive at
\begin{equation}\label{01.27.11}
	\int_\Om y^2(0)\df x\leq C\iint_{\Ga^+\ts (0,T)}\left|\f{\pt y}{\pt\nu}\right|^2\df S\df t,
\end{equation}
 with the constant $C>0$  again depending only on  $\al, T$ and $\Om_{\de_0}$.

Next, let  $y^T\in L^2(\Om)$.  According to \eqref{01.11.7}, it follows that $y(\f{T}{2})\in H_0^1(\Om;w)$.
Utilizing \eqref{01.27.11}, we deduce
\begin{equation*}
	\begin{split}
		\int_\Om y^2(0)\df x\leq C\iint_{\Ga^+\ts (0,\f{T}{2})}\left|\f{\pt y}{\pt\nu}\right|^2\df S\df t\leq C\iint_{\Ga^+\ts (0,T)}\left|\f{\pt y}{\pt \nu}\right|^2\df S\df t.
	\end{split}
\end{equation*}
This completes the proof of the theorem.
\end{proof}

\end{document}